\newtheorem{theorem}{Theorem}
\newtheorem{lemma}{Lemma}
\newtheorem{proposition}{Proposition}
\theoremstyle{definition}
\newtheorem{definition}{Definition}
\newtheorem{example}{Example}
\newtheorem{claim}{Claim}
\theoremstyle{remark}
\newtheorem{remark}{Remark}
\newtheorem*{acknowledgement}{Acknowledgement}
\begin{document}


\title{Fourier Frames for the Cantor-4 Set}
\author{Gabriel Picioroaga}
\address{Department of Mathematical Sciences, 414 E. Clark St., University of South Dakota, Vermillion, SD 57069}
\email{gabriel.picioroaga@usd.edu}
\author{Eric S. Weber}
\address{Department of Mathematics, Iowa State University, 396 Carver Hall, Ames, IA 50011}
\email{esweber@iastate.edu}
\subjclass[2010]{Primary: 42C15, 28A80; Secondary 42B05, 46L89}
\keywords{Fourier series, frames, fractals, iterated function system, Cuntz algebra}
\date{\today}
\begin{abstract}
The measure supported on the Cantor-4 set constructed by Jorgensen-Pedersen is known to have a Fourier basis, i.e. that it possess a sequence of exponentials which form an orthonormal basis.  We construct Fourier frames for this measure via a dilation theory type construction.  We expand the Cantor-4 set to a 2 dimensional fractal which admits a representation of a Cuntz algebra.  Using the action of this algebra, an orthonormal set is generated on the larger fractal, which is then projected onto the Cantor-4 set to produce a Fourier frame.
\end{abstract}
\maketitle

Jorgensen and Pedersen \cite{JP98} demonstrated that there exist singular measures $\nu$ which are spectral--that is, they possess a sequence of exponential functions which form an orthonormal basis in $L^2(\nu)$.  The canonical example of such a singular and spectral measure is the uniform measure on the Cantor 4-set defined as follows:  
\[ C_{4} = \{ x \in [0,1] : x = \sum_{k=1}^{\infty} \dfrac{a_{k}}{4^{k}}, \ a_{k} \in \{0,2\} \}. \]
This is analogous to the standard middle third Cantor set where $4^{k}$ replaces $3^{k}$.  The set $C_{4}$ can also be described as the attractor set of the following iterated function system on $\mathbb{R}$:
\[ \tau_{0}(x) = \dfrac{x}{4}, \qquad \tau_{2}(x) = \dfrac{x + 2}{4}. \]
The uniform measure on the set $C_{4}$ then is the unique probability measure $\mu_{4}$ which is invariant under this iterated function system:
\[ \int f(x) d \mu_{4}(x) = \dfrac{1}{2} \left( \int f( \tau_{0} (x) ) d \mu_{4}(x) + \int f( \tau_{2} (x) ) d \mu_{4}(x) \right) \]
for all $f \in C(\mathbb{R})$, see \cite{Hut81} for details.  The standard spectrum for $\mu_{4}$ is $\Gamma_{4} = \{ \sum_{n=0}^{N} l_{n} 4^{n} : l_{n} \in \{0,1\} \}$, though there are many spectra \cite{DHS09a,DHL13a}.

Remarkably, Jorgensen and Pedersen prove that the uniform measure $\mu_{3}$ on the standard middle third Cantor set is not spectral.  Indeed, there are no three mutually orthogonal exponentials in $L^2(\mu_{3})$.  Thus, there has been much attention on whether there exists a Fourier frame for $L^2(\mu_{3})$--the problem is still unresolved, but see \cite{DHW11a,DHW14a} for progress in this regard.  In this paper, we will construct Fourier frames for $L^2(\mu_{4})$ using a dilation theory type argument.  The motivation is whether the construction we demonstrate here for $\mu_{4}$ will be applicable to $\mu_{3}$.  Fourier frames for $\mu_{4}$ were constructed in \cite{DHW14a} using a duality type construction.

A frame for a Hilbert space $H$ is a sequence $\{ x_{n} \}_{n \in I} \subset H$ such that there exists constants $A, B > 0$ such that for all $v \in H$,
\[ A \| v \|^2 \leq \sum_{n \in I} | \langle v , x_{n} \rangle |^2 \leq B \| v \|^2. \]
The largest $A$ and smallest $B$ which satisfy these inequalities are called the frame bounds.  The frame is called a Parseval frame if both frame bounds are $1$.  The sequence $\{x_{n} \}_{n \in I}$ is a Bessel sequence if there exists a constant $B$ which satisfies the second inequality, whether or not the first inequality holds; $B$ is called the Bessel bound.  A Fourier frame for $L^2(\mu_{4})$ is a sequence of frequencies $\{ \lambda_{n} \}_{n \in I} \subset \mathbb{R}$ together with a sequence of ``weights'' $\{ d_{n} \}_{n \in I} \subset \mathbb{C}$ such that $x_{n} = d_{n} e^{2 \pi i \lambda_{n} x}$ is a frame.  Fourier frames (unweighted) for Lebesgue measure were introduced by Duffin and Schaffer \cite{DS52a}, see also Ortega-Cerda and Seip \cite{OCS02a}.

It was proven in \cite{HL00a} that a frame for a Hilbert space can be dilated to a Riesz basis for a bigger space, that is to say, that any frame is the image under a projection of a Riesz basis.  Moreover, a Parseval frame is the image of an orthonormal basis under a projection.  This result is now known to be a consequence of the Naimark dilation theory.   This will be our recipe for constructing a Fourier frame: constructing a basis in a bigger space and then projecting onto a subspace.  We require the following result along these lines \cite{A95a}:

\begin{lemma} \label{L:frame-proj}
Let $H$ be a Hilbert space, $V,K$ closed subspaces, and let $P_{V}$ be the projection onto $V$.  If $\{ x_{n} \}_{n \in I}$ is a frame in $K$ with frame bounds $A,B$, then:
\begin{enumerate}
\item $\{ P_{V} x_{n} \}_{n \in I}$ is a Bessel sequence in $V$ with Bessel bound no greater than $B$;
\item if the projection $P_{V} : K \to V$ is onto, then $\{P_{V} x_{n} \}_{n \in I}$ is a frame in $V$;
\item if $V \subset K$, then then $\{P_{V} x_{n} \}_{n \in I}$ is a frame in $V$ with frame bounds between $A$ and $B$.
\end{enumerate}
\end{lemma}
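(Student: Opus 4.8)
The plan is to reduce all three parts to the single frame inequality $A\|w\|^2 \le \sum_{n\in I}|\langle w, x_n\rangle|^2 \le B\|w\|^2$ evaluated at a suitable vector $w \in K$. The starting observation is that for any $v \in V$, using that $P_V$ is self-adjoint and idempotent with $P_V v = v$, and that each $x_n$ lies in $K$ so that $\langle v, x_n\rangle = \langle P_K v, x_n\rangle$, we have
\[ \sum_{n\in I} |\langle v, P_V x_n\rangle|^2 = \sum_{n\in I} |\langle P_V v, x_n\rangle|^2 = \sum_{n\in I} |\langle v, x_n\rangle|^2 = \sum_{n\in I} |\langle P_K v, x_n\rangle|^2. \]
Everything then follows from controlling $\|P_K v\|$ against $\|v\|$.

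For (1), I would simply apply the upper frame bound to $P_K v \in K$ and use $\|P_K v\| \le \|v\|$, giving $\sum_{n} |\langle v, P_V x_n\rangle|^2 \le B\|P_K v\|^2 \le B\|v\|^2$; since no lower bound is claimed, this is the full statement. For (3), the hypothesis $V \subset K$ forces $P_K v = v$ for every $v \in V$, so the displayed quantity equals $\sum_{n} |\langle v, x_n\rangle|^2$ exactly, and both frame inequalities for $\{x_n\}$ transfer verbatim; hence $\{P_V x_n\}$ is a frame in $V$ with the same bounds $A$ and $B$.

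The essential content is in (2), where I must produce a lower bound without the luxury of $V \subset K$. The plan is to identify the restricted projection $P_K|_V : V \to K$ as the Hilbert-space adjoint of $P_V|_K : K \to V$: for $k \in K$ and $v \in V$ one checks $\langle P_V k, v\rangle = \langle k, v\rangle = \langle k, P_K v\rangle$, so $(P_V|_K)^* = P_K|_V$. I then invoke the standard operator-theoretic fact that a bounded operator between Hilbert spaces is surjective if and only if its adjoint is bounded below. The hypothesis that $P_V : K \to V$ is onto therefore yields a constant $c > 0$ with $\|P_K v\| \ge c\|v\|$ for all $v \in V$. Feeding this and the lower frame bound into the displayed identity gives $\sum_{n} |\langle v, P_V x_n\rangle|^2 \ge A\|P_K v\|^2 \ge Ac^2 \|v\|^2$, while the upper bound from (1) still applies; hence $\{P_V x_n\}$ is a frame in $V$ with bounds $Ac^2$ and $B$. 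The main obstacle is precisely this surjective-versus-bounded-below duality: verifying that the range of $P_V|_K$ is all of $V$ (not merely dense) is what guarantees $c > 0$, and this is where the closed-range theorem does the real work.
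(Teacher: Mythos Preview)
Your proof is correct. The paper does not supply its own proof of this lemma; it simply states the result and attributes it to \cite{A95a}, so there is no argument in the paper to compare against. Your reduction via the identity $\sum_n |\langle v, P_V x_n\rangle|^2 = \sum_n |\langle P_K v, x_n\rangle|^2$ is clean, and the use of the surjective/bounded-below duality for $(P_V|_K)^* = P_K|_V$ in part (2) is exactly the right tool.
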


Note that if $V \subset K$ and $\{x_{n} \}_{n \in I}$ is a Parseval frame for $K$, then $\{ P_V x_{n} \}_{n \in I}$ is a Parseval frame for $V$.  In the second item above, it is possible that the lower frame bound for $\{ P_{V} x_{n} \}$ is smaller than $A$, but the upper frame bound is still no greater than $B$.

The foundation of our construction is a dilation theory type argument.  Our first step, described in Section \ref{Sec:dilation}, is to consider the  fractal like set $C_{4} \times [0,1]$, which we will view in terms of an iterated function system.  This IFS will give rise to a representation of the Cuntz algebra $\mathcal{O}_{4}$ on $L^2(\mu_{4} \times \lambda)$ since $\mu_{4} \times \lambda$ is the invariant measure under the IFS.  Then in Section \ref{Sec:ortho}, we will generate via the action of $\mathcal{O}_{4}$ an orthonormal set in $L^2(\mu_{4} \times \lambda)$ whose vectors have a particular structure.  In Section \ref{Sec:proj}, we consider a subspace $V$ of $L^2(\mu_{4} \times \lambda)$ which can be naturally identified with $L^2(\mu_{4})$, and then project the orthonormal set onto $V$ to, ultimately, obtain a frame.  Of paramount importance will be whether the orthonormal set generated by $\mathcal{O}_{4}$ spans the subspace $V$ so that the projection yields a Parseval frame.  Section \ref{Sec:const} demonstrates concrete constructions in which this occurs, and identifies all possible Fourier frames that can be constructed using this method.

We note here that there may be Fourier frames for $L^2(\mu_{4})$ which cannot be constructed in this manner, but we are unaware of such an example.

\section{Dilation of the Cantor-4 Set} \label{Sec:dilation}

We wish to construct a Hilbert space $H$ which contains $L^2(\mu_{4})$ as a subspace in a natural way.  We will do this by making the fractal $C_{4}$ bigger as follows.  We begin with an iterated function system on $\mathbb{R}^2$ given by:
\[ \Upsilon_{0} (x,y) = ( \frac{x}{4}, \frac{y}{2} ), \ \Upsilon_{1} (x,y) = ( \frac{x+2}{4}, \frac{y}{2} ), \  \Upsilon_{2} (x,y) = ( \frac{x}{4}, \frac{y+1}{2} ), \  \Upsilon_{3} (x,y) = ( \frac{x+2}{4}, \frac{y+1}{2} ). \]
As these are contractions on $\mathbb{R}^2$, there exists a compact attractor set, which is readily verified to be $C_{4} \times [0,1]$.  Likewise, by Hutchinson \cite{Hut81}, there exists an invariant probability measure supported on $C_{4} \times [0,1]$; it is readily verified that this invariant measure is $\mu_{4} \times \lambda$, where $\lambda$ denotes the Lebesgue measure restricted to $[0,1]$.  Thus, for every continuous function $f : \mathbb{R}^2 \to \mathbb{C}$,
\begin{multline} \label{Eq:refine}
\int f(x,y) \ d (\mu_{4} \times \lambda) = \dfrac{1}{4} \left( \int f(\frac{x}{4},\frac{y}{2}) \ d (\mu_{4} \times \lambda) + \int f(\frac{x+2}{4},\frac{y}{2}) \ d (\mu_{4} \times \lambda) \right. \\ 
\left. + \int f(\frac{x}{4},\frac{y+1}{2}) \ d (\mu_{4} \times \lambda) + \int f(\frac{x+2}{4},\frac{y+1}{2}) \ d (\mu_{4} \times \lambda) \right). 
\end{multline}
The iterated function system $\Upsilon_{j}$ has a left inverse on $C_{4} \times [0,1]$, given by 
\[ R: C_{4} \times [0,1] \to C_{4} \times [0,1] : (x,y) \mapsto  (4x, 2y) \mod 1, \]
so that $R \circ \Upsilon_{j} (x,y) = (x,y)$ for $j=0,1,2,3$.

We will use the iterated function system to define an action of the Cuntz algebra $\mathcal{O}_{4}$ on $L^2(\mu_{4} \times \lambda)$.  To do so, we  choose filters
\begin{align*}
m_{0}(x,y) &= H_{0}(x,y) \\
m_{1}(x,y) &= e^{2 \pi i x} H_{1}(x,y) \\
m_{2}(x,y) &= e^{4 \pi i x} H_{2}(x,y) \\
m_{3}(x,y) &= e^{6 \pi i x} H_{3}(x,y)
\end{align*}
where
\[ H_{j}(x,y) = \sum_{k=0}^{3} a_{jk} \chi_{\Upsilon_{k}(C_{4} \times [0,1])}(x,y) \]
for some choice of scalar coefficients $a_{jk}$.  In order to obtain a representation of $\mathcal{O}_{4}$ on $L^2(\mu_{4} \times \lambda)$, we require that the above filters satisfy the matrix equation $\mathcal{M}^{*}(x,y) \mathcal{M}(x,y) = I$ for $\mu_{4} \times \lambda$ almost every $(x,y)$, where
\[ \mathcal{M}(x,y) =
\begin{pmatrix}
m_{0}(\Upsilon_{0}(x,y))  & m_{0}(\Upsilon_{1}(x,y)) & m_{0}(\Upsilon_{2}(x,y)) & m_{0}(\Upsilon_{3}(x,y)) \\
m_{1}(\Upsilon_{0}(x,y))  & m_{1}(\Upsilon_{1}(x,y)) & m_{1}(\Upsilon_{2}(x,y)) & m_{1}(\Upsilon_{3}(x,y)) \\
m_{2}(\Upsilon_{0}(x,y))  & m_{2}(\Upsilon_{1}(x,y)) & m_{2}(\Upsilon_{2}(x,y)) & m_{2}(\Upsilon_{3}(x,y)) \\
m_{3}(\Upsilon_{0}(x,y))  & m_{3}(\Upsilon_{1}(x,y)) & m_{3}(\Upsilon_{2}(x,y)) & m_{3}(\Upsilon_{3}(x,y))
\end{pmatrix}
\]
For our choice of filters, the matrix $\mathcal{M}$ becomes 
\[ \mathcal{M}(x,y) =
\left(
\begin{array} {rrrr}
a_{00} & a_{01} & a_{02} & a_{03} \\
e^{\pi i x/2} a_{10} & -e^{\pi i x/2} a_{11} & e^{\pi i x/2} a_{12} & -e^{\pi i x/2} a_{13} \\
e^{\pi i x} a_{20} & e^{\pi i x} a_{21} & e^{\pi i x} a_{22} & e^{\pi i x} a_{23} \\
e^{3 \pi i x/2} a_{30} & -e^{3 \pi i x/2}a_{31} & e^{3 \pi i x/2}a_{32} & -e^{3 \pi i x/2}a_{33}
\end{array}
\right),
\]
which is unitary if and only if the matrix
\[ H =
\left(
\begin{array} {rrrr}
a_{00} & a_{01} & a_{02} & a_{03} \\
a_{10} & -a_{11} &  a_{12} & -a_{13} \\
a_{20} & a_{21} &  a_{22} & a_{23} \\
a_{30} & -a_{31} & a_{32} & -a_{33}
\end{array}
\right)
\]
is unitary.  For the remainder of this section, we assume that $H$ is unitary.

\begin{lemma}
The operator $S_{j} : L^2(\mu_{4} \times \lambda) \to L^2(\mu_{4} \times \lambda)$ given by
\[ [S_{j}f] (x,y) = \sqrt{4} m_{j}(x,y) f(R(x,y)) \]
is an isometry.
\end{lemma}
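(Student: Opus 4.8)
The plan is to compute $\|S_j f\|^2$ directly and show it equals $\|f\|^2$ for every $f \in L^2(\mu_4 \times \lambda)$, using the invariance of $\mu_4 \times \lambda$ together with the unitarity of $H$. Writing $\nu = \mu_4 \times \lambda$ for brevity, I would start from
\[ \|S_j f\|^2 = \int 4\,|m_j(x,y)|^2\,|f(R(x,y))|^2 \, d\nu(x,y), \]
and regard the integrand $g(x,y) = 4\,|m_j(x,y)|^2\,|f(R(x,y))|^2$ as a single nonnegative measurable function to which the refinement identity \eqref{Eq:refine} is to be applied.

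The key simplification comes from the fact that $R$ is a left inverse of the maps $\Upsilon_k$. Applying \eqref{Eq:refine} to $g$ gives $\int g \, d\nu = \frac{1}{4}\sum_{k=0}^{3} \int g(\Upsilon_k(x,y))\,d\nu$, and since $R(\Upsilon_k(x,y)) = (x,y)$ the factor $|f(R(\Upsilon_k(x,y)))|^2$ collapses to $|f(x,y)|^2$, which no longer depends on $k$ and can be pulled out of the sum. This leaves
\[ \|S_j f\|^2 = \int |f(x,y)|^2 \left( \sum_{k=0}^{3} |m_j(\Upsilon_k(x,y))|^2 \right) d\nu(x,y). \]
It then remains to identify the inner sum as the constant $1$: because $|e^{2\pi i \ell x}| = 1$ and $H_j$ takes the value $a_{jk}$ on $\Upsilon_k(C_4 \times [0,1])$, one has $|m_j(\Upsilon_k(x,y))|^2 = |a_{jk}|^2$, and $\sum_k |a_{jk}|^2 = 1$ is exactly the statement that the $j$-th row of the unitary matrix $H$ has unit norm (equivalently, that the diagonal entries of $\mathcal{M}\mathcal{M}^* = I$ are all $1$, the sign changes in $H$ being irrelevant to the moduli). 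Substituting yields $\|S_j f\|^2 = \|f\|^2$.

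The one point requiring care is measure-theoretic rather than algebraic: identity \eqref{Eq:refine} is stated for $f \in C(\mathbb{R}^2)$, whereas $g$ is merely bounded and measurable. I would first extend \eqref{Eq:refine} from continuous functions to all bounded Borel functions by a standard approximation argument, noting that the identity simply expresses $\nu = \frac{1}{4}\sum_k \nu \circ \Upsilon_k^{-1}$, which then holds against any bounded measurable integrand. A second, minor subtlety is that $R$ is only defined $\nu$-almost everywhere, since the branch sets $\Upsilon_k(C_4 \times [0,1])$ overlap on a $\nu$-null set; consequently the composition $f \circ R$ and the evaluation $H_j(\Upsilon_k(x,y)) = a_{jk}$ are to be read as holding for $\nu$-a.e.\ $(x,y)$, which does not affect the integrals. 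Once these are in place the computation above is routine, and in fact gives $\|S_j f\| = \|f\|$ exactly, so $S_j$ is an isometry.
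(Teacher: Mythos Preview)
Your proof is correct and follows essentially the same route as the paper: apply the refinement identity \eqref{Eq:refine} to $|S_jf|^2$, use $R\circ\Upsilon_k = \mathrm{id}$ to collapse the argument of $f$, and then recognize $\sum_k |m_j(\Upsilon_k(x,y))|^2$ as the squared norm of the $j$-th row of the unitary matrix $\mathcal{M}$ (equivalently of $H$, since the moduli are unaffected). Your additional remarks on extending \eqref{Eq:refine} to bounded Borel integrands and on the a.e.\ definition of $R$ are careful clarifications that the paper simply takes for granted.
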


\begin{proof}
We calculate:
\begin{align*}
\| S_{j}f \|^2 &= \int |\sqrt{4} m_{j}(x,y) f(R(x,y))|^2 \ d(\mu_{4} \times \lambda) \\
&= \dfrac{1}{4} \sum_{k=0}^{3} \int 4 | m_{j}(\Upsilon_{k}(x,y)) f(R(\Upsilon_{k}(x,y))) |^2 \ d(\mu_{4} \times \lambda) \\
&=  \int \left( \sum_{k=0}^{3} | m_{j}(\Upsilon_{k}(x,y))|^2 \right) |f(x,y)|^2 \ d(\mu_{4} \times \lambda).
\end{align*}
We used Equation (\ref{Eq:refine}) in the second line.  The sum in the integral is the square of the Euclidean norm of the $j$-th row of the matrix $\mathcal{M}$, which is unitary.  Hence, the sum is $1$, so the integral is $\| f \|^2$, as required.
\end{proof}

\begin{lemma}
The adjoint is given by
\[ [S_{j}^{*} f] (x,y) = \dfrac{1}{2} \sum_{k=0}^{3} \overline{m_{j} (\Upsilon_{k}(x,y)) } f(\Upsilon_{j}(x,y)). \]
\end{lemma}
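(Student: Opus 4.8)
The plan is to verify the formula directly from the defining property of the adjoint, namely that $\langle S_j f, g\rangle = \langle f, S_j^* g\rangle$ must hold for all $f,g \in L^2(\mu_4 \times \lambda)$. First I would expand the left-hand side, using $\sqrt{4}=2$, as
\[ \langle S_j f, g \rangle = \int 2\, m_j(x,y)\, f(R(x,y))\, \overline{g(x,y)} \; d(\mu_4 \times \lambda). \]
The whole integrand is a single function in $L^1(\mu_4\times\lambda)$, so the objective is to rewrite this integral so that $f$ reappears evaluated at $(x,y)$, paired against an explicit linear expression in $g$; reading off that expression (after a conjugation) will give $S_j^* g$.

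The key step is to feed the integrand into the refinement identity (\ref{Eq:refine}). Applying that identity with $F(x,y) = 2\, m_j(x,y)\, f(R(x,y))\, \overline{g(x,y)}$ turns the integral into $\tfrac{1}{4}\sum_{k=0}^3 \int F(\Upsilon_k(x,y))\, d(\mu_4\times\lambda)$. In the $k$-th summand the argument of $f$ becomes $R(\Upsilon_k(x,y))$, and here I would invoke the left-inverse property $R\circ \Upsilon_k = \mathrm{id}$ recorded above, so that $f(R(\Upsilon_k(x,y))) = f(x,y)$ for every $k$. Pulling the common factor $f(x,y)$ out of the sum and combining constants ($\tfrac{1}{4}\cdot 2 = \tfrac{1}{2}$) yields
\[ \langle S_j f, g\rangle = \int f(x,y)\left( \frac{1}{2}\sum_{k=0}^3 m_j(\Upsilon_k(x,y))\, \overline{g(\Upsilon_k(x,y))} \right) d(\mu_4\times\lambda). \]
Comparing this against $\langle f, S_j^* g\rangle = \int f(x,y)\, \overline{[S_j^* g](x,y)}\, d(\mu_4\times\lambda)$ and taking complex conjugates then identifies $[S_j^* g](x,y)$ with $\tfrac{1}{2}\sum_{k=0}^3 \overline{m_j(\Upsilon_k(x,y))}\, g(\Upsilon_k(x,y))$, the conjugation sending $m_j(\Upsilon_k(x,y))$ to $\overline{m_j(\Upsilon_k(x,y))}$ and $\overline{g}$ back to $g$, which is the asserted expression (with the summation index $k$ governing the argument of $g$).

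The one point that needs genuine care — and the main obstacle — is that the refinement identity (\ref{Eq:refine}) is stated only for continuous $f$, whereas the integrand $F$ above involves the discontinuous map $R$, the filters $m_j$ built from characteristic functions, and arbitrary $L^2$ functions $f,g$. I would therefore first extend (\ref{Eq:refine}) to all $F \in L^1(\mu_4\times\lambda)$: both sides are bounded linear functionals of $F$ on $L^1(\mu_4\times\lambda)$ (the right-hand side because each $\Upsilon_k$ pushes $\mu_4\times\lambda$ forward onto a sub-piece of $\mu_4\times\lambda$ with a factor of $\tfrac14$), and they agree on the dense subspace $C(\mathbb{R}^2)$, hence agree on all of $L^1$. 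Once this extension is secured, the computation above is justified, and what remains is purely routine bookkeeping of the complex conjugates and of which variable each $\Upsilon_k$ and $R$ act upon.
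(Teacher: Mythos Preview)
Your proof is correct and follows essentially the same route as the paper's: expand $\langle S_j f, g\rangle$, apply the refinement identity (\ref{Eq:refine}), use $R\circ\Upsilon_k = \mathrm{id}$, and read off $S_j^*g$. You are in fact more careful than the paper in justifying the extension of (\ref{Eq:refine}) from continuous to $L^1$ integrands, and you correctly note that the summation index on the argument of $f$ should be $k$ rather than $j$ (a typo in the statement).
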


\begin{proof}
Let $f,g \in L^2(\mu_{4} \times \lambda)$.  We calculate
\begin{align*}
\langle S_{j}f, g \rangle &= \int \sqrt{4} m_{j}(x,y) f(R(x,y)) \overline{ g(x,y) } \ d(\mu_{4} \times \lambda) \\
&= \dfrac{1}{4} \sum_{k=0}^{3} \int \sqrt{4} m_{j}(\Upsilon_{k}(x,y)) f(R(\Upsilon_{k}(x,y))) \overline{ g(\Upsilon_{k} (x,y)) } \ d(\mu_{4} \times \lambda) \\
&=  \int f(x,y) \overline{ \left( \dfrac{1}{2} \sum_{k=0}^{3}  \overline{m_{j}(\Upsilon_{k}(x,y))} g(\Upsilon_{k} (x,y)) \right) } \ d(\mu_{4} \times \lambda)
\end{align*}
where we use Equation (\ref{Eq:refine}) and the fact that $R$ is a left inverse of $\Upsilon_{k}$.
\end{proof}

\begin{lemma}
The isometries $S_{j}$ satisfy the Cuntz relations:
\[ S_{j}^{*} S_{k} = \delta_{jk} I, \qquad \sum_{k=0}^{3} S_{k} S_{k}^{*} = I. \]
\end{lemma}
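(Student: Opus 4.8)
The plan is to verify both relations by direct computation, in each case reducing the statement to the unitarity of $\mathcal{M}(x,y)$, which we have assumed. The two relations turn out to use the two distinct orthogonality conditions packaged inside unitarity: $S_{j}^{*}S_{k}=\delta_{jk}I$ reflects orthonormality of the \emph{rows} of $\mathcal{M}$ (i.e. $\mathcal{M}\mathcal{M}^{*}=I$), whereas $\sum_{k}S_{k}S_{k}^{*}=I$ reflects orthonormality of the \emph{columns} (i.e. $\mathcal{M}^{*}\mathcal{M}=I$).

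First I would establish $S_{j}^{*}S_{k}=\delta_{jk}I$. Applying $S_{k}$ and then the formula for $S_{j}^{*}$ from the previous lemma, the crucial simplification is the identity $R\circ\Upsilon_{\ell}=\mathrm{id}$, which turns the nested composition $f(R(\Upsilon_{\ell}(x,y)))$ into $f(x,y)$ and lets me pull $f(x,y)$ outside the sum. The scalar multiplying $f(x,y)$ is then $\sum_{\ell=0}^{3}\overline{m_{j}(\Upsilon_{\ell}(x,y))}\,m_{k}(\Upsilon_{\ell}(x,y))$, which is precisely the Hermitian inner product of the $k$-th and $j$-th rows of $\mathcal{M}(x,y)$, i.e. the $(k,j)$ entry of $\mathcal{M}\mathcal{M}^{*}$. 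Since $\mathcal{M}$ is unitary for $\mu_{4}\times\lambda$-a.e. $(x,y)$, this equals $\delta_{jk}$, giving the first relation.

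The second relation is the main obstacle and demands more care, because here $S_{k}^{*}f$ is evaluated at $R(x,y)$, which reintroduces the four preimage points $\Upsilon_{\ell}(R(x,y))$ instead of collapsing everything. The structural fact I would exploit is that each point of the attractor lies in exactly one first-level cylinder: there is a unique index $k_{0}=k_{0}(x,y)$ with $(x,y)=\Upsilon_{k_{0}}(R(x,y))$. Computing $[S_{k}S_{k}^{*}f](x,y)$, summing over $k$, and interchanging the $k$- and $\ell$-sums, I would obtain
\[ \Big[\textstyle\sum_{k}S_{k}S_{k}^{*}f\Big](x,y)=\sum_{\ell=0}^{3}\Big(\sum_{k=0}^{3}m_{k}(x,y)\,\overline{m_{k}(\Upsilon_{\ell}(R(x,y)))}\Big)f(\Upsilon_{\ell}(R(x,y))). \]
Writing $(x,y)=\Upsilon_{k_{0}}(R(x,y))$, the inner scalar becomes $\sum_{k}m_{k}(\Upsilon_{k_{0}}(R(x,y)))\,\overline{m_{k}(\Upsilon_{\ell}(R(x,y)))}$, which is the inner product of columns $k_{0}$ and $\ell$ of $\mathcal{M}(R(x,y))$, i.e. the $(\ell,k_{0})$ entry of $\mathcal{M}^{*}\mathcal{M}$. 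By unitarity this equals $\delta_{\ell,k_{0}}$, so the $\ell$-sum collapses to the single surviving term $\ell=k_{0}$, which is exactly $f(\Upsilon_{k_{0}}(R(x,y)))=f(x,y)$; hence $\sum_{k}S_{k}S_{k}^{*}=I$.

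The delicate point throughout is the bookkeeping: recognizing that the first relation produces a \emph{row} inner product of $\mathcal{M}$ while the second produces a \emph{column} inner product, and correctly isolating the distinguished index $k_{0}(x,y)$ so that precisely one of the four terms $f(\Upsilon_{\ell}(R(x,y)))$ reproduces $f(x,y)$. Once these identifications are in place, both Cuntz relations follow at once from $\mathcal{M}^{*}\mathcal{M}=\mathcal{M}\mathcal{M}^{*}=I$.
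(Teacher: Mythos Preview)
Your argument is correct. For the orthogonality relation $S_{j}^{*}S_{k}=\delta_{jk}I$ you proceed exactly as the paper does: apply the adjoint formula, use $R\circ\Upsilon_{\ell}=\mathrm{id}$ to free $f(x,y)$ from the sum, and identify the remaining scalar as the inner product of rows $j$ and $k$ of $\mathcal{M}(x,y)$.

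For the identity relation $\sum_{k}S_{k}S_{k}^{*}=I$ you take a genuinely different route. The paper argues weakly: it expands $\langle\sum_{k}S_{k}S_{k}^{*}f,g\rangle=\sum_{k}\langle S_{k}^{*}f,S_{k}^{*}g\rangle$, interchanges sums to isolate the column inner product $\sum_{k}\overline{m_{k}(\Upsilon_{\ell}(x,y))}\,m_{k}(\Upsilon_{n}(x,y))=\delta_{\ell n}$, and then invokes the refinement equation (\ref{Eq:refine}) to collapse $\tfrac{1}{4}\sum_{n}\int f(\Upsilon_{n})\overline{g(\Upsilon_{n})}$ back to $\langle f,g\rangle$. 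Your approach is pointwise: you observe that for $(\mu_{4}\times\lambda)$-a.e.\ $(x,y)$ there is a unique first-level cylinder containing it, hence a distinguished index $k_{0}$ with $\Upsilon_{k_{0}}(R(x,y))=(x,y)$, and then the column orthogonality of $\mathcal{M}(R(x,y))$ kills every term except $\ell=k_{0}$, returning $f(x,y)$ directly. This is valid because the images $\Upsilon_{k}(C_{4}\times[0,1])$ overlap only on the measure-zero set $C_{4}\times\{1/2\}$; you should state this a.e.\ qualification explicitly. The trade-off: your argument is more elementary and yields the pointwise identity without integrating against a test function $g$, but it relies on the essential disjointness of the cylinders; the paper's weak argument bypasses any discussion of overlaps by absorbing everything into the self-similarity relation (\ref{Eq:refine}).
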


\begin{proof}
We consider the orthogonality relation first.  Let $f \in L^2(\mu_{4} \times \lambda)$.  We calculate:
\begin{align*}
[S_{j}^{*} S_{k} f](x,y) &= \dfrac{1}{2} \sum_{\ell = 0}^{3} \overline{ m_{j}(\Upsilon_{\ell}(x,y))} [S_{k} f]( \Upsilon_{\ell}(x,y) ) \\
&= \dfrac{1}{2} \sum_{\ell = 0}^{3} \overline{ m_{j}(\Upsilon_{\ell}(x,y))} \sqrt{4} m_{k}(\Upsilon_{\ell}(x,y)) f( R(\Upsilon_{\ell}(x,y) ) ) \\
&= \left( \sum_{\ell = 0}^{3} \overline{ m_{j}(\Upsilon_{\ell}(x,y))} m_{k}(\Upsilon_{\ell}(x,y)) \right) f(x,y). 
\end{align*}
Note that the sum is the scalar product of the $k$-th row with the $j$-th row of the matrix $\mathcal{M}$, which is unitary.  Hence, the sum is $\delta_{jk}$ as required.

Now for the identity relation, let $f,g \in L^2(\mu_{4} \times \lambda)$.  We calculate:
{\allowdisplaybreaks
\begin{align}
\langle \sum_{k=0}^{3} &S_{k} S_{k}^{*} f, g \rangle = \sum_{k=0}^{3} \langle S_{k}^{*} f, S_{k}^{*} g \rangle \notag \\
&= \sum_{k=0}^{3} \int \left( \dfrac{1}{2} \sum_{\ell = 0}^{3} \overline{ m_{k}(\Upsilon_{\ell} (x,y)) } f(\Upsilon_{\ell}(x,y)) \right) \left( \overline{ \dfrac{1}{2} \sum_{n = 0}^{3} \overline{ m_{k}(\Upsilon_{n} (x,y)) } g(\Upsilon_{n}(x,y))} \right) \ d(\mu_{4} \times \lambda) \notag \\
&= \sum_{\ell=0}^{3} \sum_{n=0}^{3} \dfrac{1}{4} \int \left( \sum_{k = 0}^{3} \overline{ m_{k}(\Upsilon_{\ell} (x,y)) }m_{k}(\Upsilon_{n} (x,y)) \right)  f(\Upsilon_{\ell}(x,y)) \overline{ g(\Upsilon_{n}(x,y))}  \ d(\mu_{4} \times \lambda) \notag \\
&= \dfrac{1}{4} \sum_{n=0}^{3} \int f(\Upsilon_{n}(x,y)) \overline{ g(\Upsilon_{n}(x,y))}  \ d(\mu_{4} \times \lambda) \notag \\
&= \int f(x,y) \overline{ g(x,y)}  \ d(\mu_{4} \times \lambda) \notag \\
&= \langle f, g \rangle. \notag
\end{align}
}
Note that the sum over $k$ in the third line is the scalar product of the $\ell$-th column with the $n$-th column of $\mathcal{M}$, so the sum collapses to $\delta_{\ell n}$.  The sum on $n$ in the fourth line collapses by Equation (\ref{Eq:refine}).
\end{proof}

\section{Orthonormal Sets in $L^2(\mu_{4} \times \lambda)$} \label{Sec:ortho}

Since the isometries $S_{j}$ satisfy the Cuntz relations, we can use them to generate orthonormal sets in the space $L^2(\mu_{4} \times \lambda)$.  We do so by having the isometries act on a generating vector.  We consider words in the alphabet $\{0,1,2,3\}$; let $W_{4}$ denote the set of all such words.  For a word $\omega = j_{K} j_{K-1} \dots j_{1}$, we denote by $| \omega | = K$ the length of the word, and define
\[ S_{\omega} f = S_{j_{K}} S_{j_{K-1}} \dots S_{j_{1}} f. \]
\begin{definition}
Let
\[ X_{4} = \{ \omega \in W_{4}: | \omega | = 1 \} \cup \{ \omega \in W_{4} : | \omega | \geq 2, \ j_{1} \neq 0 \}. \]
\end{definition}
For convenience, we allow the empty word $\omega_{\emptyset}$ with length $0$, and define $S_{\omega_{\emptyset}} = I$, the identity.

\begin{lemma} \label{L:orthonormal}
Suppose $f \in L^2(\mu_{4} \times \lambda)$ with $\| f\| = 1$, and that $S_{0} f = f$.  Then,
\[ \{ S_{\omega} f : \omega \in X_{4} \} \]
is an orthonormal set.
\end{lemma}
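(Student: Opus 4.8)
The plan is to verify two things: that each vector $S_{\omega} f$ has unit norm, and that distinct such vectors are orthogonal. The first is immediate, since each $S_{j}$ is an isometry, any composition $S_{\omega} = S_{j_{K}} \cdots S_{j_{1}}$ is again an isometry, so $\| S_{\omega} f \| = \| f \| = 1$.

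For orthogonality I would fix distinct words $\omega = j_{K} \cdots j_{1}$ and $\omega' = i_{L} \cdots i_{1}$ in $X_{4}$, assume without loss of generality that $L \leq K$, and rewrite
\[ \langle S_{\omega} f, S_{\omega'} f \rangle = \langle S_{\omega'}^{*} S_{\omega} f, f \rangle = \langle S_{i_{1}}^{*} \cdots S_{i_{L}}^{*} S_{j_{K}} \cdots S_{j_{1}} f, f \rangle. \]
The relation $S_{j}^{*} S_{k} = \delta_{jk} I$ lets me collapse the innermost adjacent pair $S_{i_{L}}^{*} S_{j_{K}}$, then $S_{i_{L-1}}^{*} S_{j_{K-1}}$, and so on, matching the leftmost (outermost) letters of the two words one at a time. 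If at some stage the two letters being compared differ before $\omega'$ is exhausted, the corresponding factor $S_{i}^{*} S_{j}$ with $i \neq j$ vanishes and so does the inner product. Since the words are distinct, the only remaining case is that $\omega'$ is a proper prefix of $\omega$: all $L$ leading letters agree and $L < K$.

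In that remaining case the expression collapses to $\langle S_{\eta} f, f \rangle$, where $\eta = j_{K-L} \cdots j_{1}$ is the nonempty inner segment of $\omega$. Here the definition of $X_{4}$ does its work: because $|\omega| = K \geq 2$ and $\omega \in X_{4}$, its innermost letter satisfies $j_{1} \neq 0$, and this innermost letter is inherited by $\eta$. I would now invoke the hypothesis $S_{0} f = f$ to strip letters of $\eta$ from the outside: applying $f = S_{0} f$ on the right gives $\langle S_{\eta} f, f \rangle = \langle S_{0}^{*} S_{\eta} f, f \rangle$, and $S_{0}^{*} S_{j_{K-L}} = \delta_{0, j_{K-L}} I$. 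If the leading letter of $\eta$ is nonzero the inner product is already $0$; otherwise the leading letter is removed and I repeat. Since $\eta$ has innermost letter $j_{1} \neq 0$, this stripping of leading zeros must terminate at a nonzero letter (in the worst case after $\eta$ has been reduced to $S_{j_{1}}$ alone), at which point $S_{0}^{*} S_{j_{1}} = 0$ forces the inner product to vanish.

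The routine steps are the norm computation and the letter-by-letter cancellation via $S_{j}^{*} S_{k} = \delta_{jk} I$. The conceptual heart of the argument, and the step I expect to require the most care, is the prefix case: recognizing that the restriction $j_{1} \neq 0$ built into $X_{4}$ is exactly what guarantees, together with $S_{0} f = f$, that the leftover $\langle S_{\eta} f, f \rangle$ collapses to $0$ rather than to $\langle f, f \rangle = 1$. Indeed, a word of length $\geq 2$ ending in $0$ would reduce under this process to $\langle f, f \rangle = 1$, which is precisely why such words are deliberately excluded from $X_{4}$.
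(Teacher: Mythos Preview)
Your proof is correct and follows essentially the same approach as the paper: both use the Cuntz relation $S_j^* S_k = \delta_{jk} I$ to cancel matching outermost letters and invoke $S_0 f = f$ to dispose of the unequal-length case. The only cosmetic difference is that the paper pads the shorter word $\omega'$ with inner zeros (writing $S_{\omega'} f = S_\rho f$ for $\rho = i_M \cdots i_1 0 \cdots 0 \notin X_4$) so as to reduce to the equal-length case in one stroke, whereas you handle the prefix case by stripping outer letters from the residual $S_\eta f$ one at a time via $\langle S_\eta f, f\rangle = \langle S_0^* S_\eta f, f\rangle$.
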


\begin{proof}
Suppose $\omega, \omega' \in X_{4}$ with $\omega \neq \omega'$.  First consider $| \omega | = | \omega' |$, with $\omega = j_{K} \dots j_{1}$ and $\omega' = i_{K} \dots i_{1}$.  Suppose that $\ell$ is the largest index such that $j_{\ell} \neq i_{\ell}$.  Then we have
\[ \langle S_{\omega} f , S_{\omega '} f \rangle = \langle S_{j_{\ell}} \dots S_{j_{1}} f , S_{i_{\ell}} \dots S_{i_{1}} f \rangle =
\langle S_{i_{\ell}}^{*} S_{j_{\ell}} \dots S_{j_{1}} f , S_{i_{\ell - 1}} \dots S_{i_{1}} f \rangle = 0
\]
by the orthogonality condition of the Cuntz relations.

Now, if $K = | \omega | > | \omega' | = M$, with $\omega' = i_{M} \dots i_{1}$, we define the word $\rho = i_{M} \dots i_{1} 0 \dots 0$ so that $| \rho | = K$.  Note that $\rho \notin X_{4}$ so $\omega \neq \rho$.  Note further that $S_{\omega'} f = S_{\rho} f$.  Thus, by a similar argument to that above, we have
\[ \langle S_{\omega} f , S_{\omega'} f \rangle = 0. \]
\end{proof}

\begin{remark}
The set $\{ S_{\omega} f : \omega \in X_{4} \}$ need not be complete.  We will provide an example of this in Example \ref{Ex:incomplete} in Section \ref{Sec:const}.
\end{remark}

Our goal is to project the set $\{ S_{\omega} f : \omega \in X_{4} \}$ onto some subspace $V$ of $L^2(\mu_{4} \times \lambda)$ to obtain a frame.  To that end, we need to know when the projection $\{ P_{V} S_{\omega} f : \omega \in X_{4} \}$ is a frame, which by Lemma \ref{L:frame-proj} requires the projection $P_{V} : K \to V$ to be onto, where $K$ is the subspace spanned by $\{ S_{\omega} f : \omega \in X_{4} \}$.  The tool we will use is the following result, which is a minor adaptation of a result from \cite{DPS14a}; we will not use this result directly, but will use all of the critical components.

\begin{theorem}\label{diez-subsp}
Let $\mathcal{H}$ be a Hilbert space, $\mathcal{K} \subset \mathcal{H}$ a closed subspace, and $(S_i)_{i=0}^{N-1}$ be a representation of the Cuntz algebra $\mathcal{O}_N$. Let $\mathcal{E}$ be an orthonormal set in $\mathcal{H}$ and $f:X\rightarrow \mathcal{K}$ a norm continuous function on a topological space $X$ with the following properties:

\begin{enumerate}

\item[i)] $\mathcal{E}=\cup_{i=0}^{N-1} S_i\mathcal{E}$ where the union is disjoint.

\item[ii)] $\overline{span}\{ f(t): t\in X\}=\mathcal{K}$ and $\|f(t)\|=1$, for all $t\in X$.

\item[iii)] There exist functions $\mathfrak{m}_i: X\rightarrow\mathbb{C}$, $g_i:X\rightarrow X$, $i=0,\dots, N-1$ such that
\begin{equation} \label{ceq1s}
S_i^*f(t)=\mathfrak{m}_i(t)f(g_i(t)), \quad t\in X.
\end{equation}

\item[iv)] There exist $c_0\in X$ such that $f(c_0)\in \overline{span} \mathcal{E}$.

\item[v)] The only function $h\in\mathcal{C}(X)$ with $h\geq 0$, $h(c)=1$, $\forall$ $c\in\{ x\in X:f(x)\in \overline{span}\mathcal{E}\}$, and
\begin{equation} \label{ceq2s}
h(t)=\sum_{i=0}^{N-1}\left| \mathfrak{m}_i(t) \right|^2 h(g_i(t)),\quad t\in X
\end{equation}
are the constant functions.

\end{enumerate}
Then $\mathcal{K} \subset \overline{span}\mathcal{E}$.
\end{theorem}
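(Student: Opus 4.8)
The plan is to show that $\mathcal{K} \subset \overline{\operatorname{span}}\,\mathcal{E}$ by constructing an appropriate continuous function $h$ on $X$ that satisfies the eigenvalue-type equation \eqref{ceq2s}, invoking hypothesis (v) to conclude $h$ is constant, and then translating that constancy into the desired containment. The natural candidate for $h$ is obtained by measuring how much of each $f(t)$ lies inside $\overline{\operatorname{span}}\,\mathcal{E}$; that is, if $Q$ denotes the orthogonal projection onto $\overline{\operatorname{span}}\,\mathcal{E}$, set $h(t) = \|Q f(t)\|^2$. Since $\|f(t)\| = 1$ by (ii), we have $0 \le h(t) \le 1$, and $h(t) = 1$ exactly when $f(t) \in \overline{\operatorname{span}}\,\mathcal{E}$. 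The goal is to prove $h \equiv 1$, because then (ii) forces every $f(t) \in \overline{\operatorname{span}}\,\mathcal{E}$, and their closed span $\mathcal{K}$ is contained in $\overline{\operatorname{span}}\,\mathcal{E}$.

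**First** I would verify that this $h$ is continuous: since $f$ is norm continuous by (ii) and $Q$ is a bounded operator, $t \mapsto Q f(t)$ is norm continuous, so $h(t) = \langle Q f(t), f(t)\rangle = \langle Qf(t), Qf(t)\rangle$ is continuous. The crux is to show $h$ satisfies \eqref{ceq2s}. Here I would exploit the self-similar structure of $\mathcal{E}$ given by (i). The key observation is that $\mathcal{E} = \bigcup_{i=0}^{N-1} S_i \mathcal{E}$ as a disjoint union, together with the Cuntz relations, implies that the projection $Q$ onto $\overline{\operatorname{span}}\,\mathcal{E}$ decomposes compatibly with the isometries. Concretely, I expect that $\overline{\operatorname{span}}\,\mathcal{E} = \bigoplus_{i=0}^{N-1} S_i(\overline{\operatorname{span}}\,\mathcal{E})$ (an orthogonal direct sum, using $S_i^* S_j = \delta_{ij} I$), which yields the operator identity $Q = \sum_{i=0}^{N-1} S_i Q S_i^*$. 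Applying this to $f(t)$, taking norms, using the orthogonality of the ranges of the $S_i$, and then substituting $S_i^* f(t) = \mathfrak{m}_i(t) f(g_i(t))$ from (iii), I obtain
\[
h(t) = \|Q f(t)\|^2 = \sum_{i=0}^{N-1} \|S_i Q S_i^* f(t)\|^2 = \sum_{i=0}^{N-1} \|Q S_i^* f(t)\|^2 = \sum_{i=0}^{N-1} |\mathfrak{m}_i(t)|^2 \, h(g_i(t)),
\]
which is precisely \eqref{ceq2s}. (Here I used that each $S_i$ is an isometry and that $Q$ commutes appropriately with the decomposition.)

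**Next** I would check the remaining boundary condition of (v): for every $c$ with $f(c) \in \overline{\operatorname{span}}\,\mathcal{E}$ we need $h(c) = 1$, which is immediate from the definition since then $Q f(c) = f(c)$ and $\|f(c)\| = 1$. Hypothesis (iv) guarantees this set is nonempty (it contains $c_0$), so the normalization $h \ge 0$, $h(c) = 1$ on that set is genuinely satisfiable by our $h$. Thus $h$ meets all the requirements in (v), and (v) forces $h$ to be a constant function; evaluating at $c_0$ (where $h(c_0) = 1$) shows the constant is $1$, so $h \equiv 1$ and the containment follows as described.

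**The main obstacle** I anticipate is rigorously justifying the operator identity $Q = \sum_{i} S_i Q S_i^*$, equivalently the orthogonal decomposition $\overline{\operatorname{span}}\,\mathcal{E} = \bigoplus_i S_i(\overline{\operatorname{span}}\,\mathcal{E})$. One must be careful that the closed span genuinely splits this way: the inclusion $\bigoplus_i S_i(\overline{\operatorname{span}}\,\mathcal{E}) \subset \overline{\operatorname{span}}\,\mathcal{E}$ is clear from (i) since each $S_i \mathcal{E} \subset \mathcal{E}$, while the reverse inclusion needs that every element of $\mathcal{E}$, hence every finite combination and its limits, lands in the right-hand sum; this uses the disjointness in (i) together with $\sum_i S_i S_i^* = I$ from the Cuntz relations, so that for any $v$ we have $v = \sum_i S_i (S_i^* v)$ and each $S_i^* v$ lies in $\overline{\operatorname{span}}\,\mathcal{E}$ whenever $v$ does (because $S_i^*(S_i e) = e \in \mathcal{E}$ and $S_i^*(S_j e) = 0$ for $j \ne i$). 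Handling the interchange of the infinite closed span with the finite sum of ranges, and confirming the ranges are mutually orthogonal via $S_i^* S_j = \delta_{ij} I$, is where the technical care is concentrated; once that decomposition is secured, the rest of the argument is a direct computation invoking (v).
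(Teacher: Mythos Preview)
Your proposal is correct and matches the intended strategy. The paper does not actually give a proof of this theorem (it is cited from \cite{DPS14a}), but the ``critical components'' it uses in the proof of Theorem~\ref{Th:main1} are exactly what you outline: defining $h(t)=\|P_{\overline{\mathrm{span}}\,\mathcal{E}}f(t)\|^{2}$ (there written explicitly as $\sum_{\omega}|\langle f(t),S_{\omega}\mathds{1}\rangle|^{2}$), deriving the recursion \eqref{ceq2s} from the disjoint decomposition $\mathcal{E}=\cup_{i}S_{i}\mathcal{E}$ together with \eqref{ceq1s}, and then invoking the uniqueness hypothesis (v) to conclude $h\equiv 1$. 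The only cosmetic difference is that the paper derives the recursion by splitting the orthonormal sum over $\mathcal{E}=\cup_{i}S_{i}\mathcal{E}$ directly, whereas you phrase the same computation via the operator identity $Q=\sum_{i}S_{i}QS_{i}^{*}$; your justification of that identity is sound.
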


\section{The Projection} \label{Sec:proj}

Recall the definition of the filters $m_{j}(x,y) = e^{2 \pi i j x} H_{j} (x,y)$ from Section \ref{Sec:dilation}.  We choose the filter coefficients $a_{j k}$ so that the matrix $H$ is unitary.  We place the additional constraint that 
\[ a_{00} = a_{01} = a_{02} = a_{03} = \dfrac{1}{2}, \] 
so that $S_{0} \mathds{1} = \mathds{1}$, where $\mathds{1}$ the function in $L^2(\mu_{4} \times \lambda)$ which is identically $1$.  As $S_{0} \mathds{1} = \mathds{1}$, by Lemma \ref{L:orthonormal}, the set $\{ S_{\omega} \mathds{1} : \omega \in X_{4} \}$ is orthonormal.   Moreover, we place the additional constraint that for every $j$, $a_{j0} + a_{j2} = a_{j1} + a_{j3}$, which will be required for our calculation of the projection.  

\begin{definition}
We define the subspace $V = \{ f \in L^2(\mu_{4} \times \lambda) : f(x,y) = g(x) \chi_{[0,1]}(y), \ g \in L^2(\mu_{4}) \}$.  Note that the subspace $V$ can be identified with $L^2(\mu_{4})$ via the isometric isomorphism $g \mapsto g(x) \chi_{[0,1]}(y)$.  We will suppress the $y$ variable in the future.
\end{definition}

\begin{definition}
We define a function $c : X_{4} \to \mathbb{N}_{0}$ as follows:  for a word $\omega = j_{K} j_{K-1} \dots j_{1}$,
\[ c(\omega) = \sum_{k=1}^{K} j_{k} 4^{K-k}. \]
Here $\mathbb{N}_{0} = \mathbb{N} \cup \{0\}$. It is readily verified that $c$ is a bijection.
\end{definition}

\begin{lemma} \label{L:Sword}
For a word $\omega = j_{K} j_{K-1} \dots j_{1}$,
\[ S_{\omega} \mathds{1} = e^{2 \pi i c(\omega) x} \left( \prod_{k=1}^{K} 2 H_{j_{k}}(R^{K-k}(x,y)) \right). \]
\end{lemma}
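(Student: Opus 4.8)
The plan is to prove the identity by induction on the word length $K = |\omega|$, peeling off the leftmost letter of $\omega$ and repeatedly using the definition $[S_{j}f](x,y) = 2 m_{j}(x,y) f(R(x,y))$ together with $m_{j}(x,y) = e^{2\pi i j x} H_{j}(x,y)$. For the base case $K=1$, write $\omega = j_{1}$; since $\mathds{1}$ is constant and $R$ is a left inverse of the $\Upsilon_{k}$, we get $S_{\omega}\mathds{1} = 2 m_{j_{1}} = 2 e^{2\pi i j_{1} x} H_{j_{1}}(x,y)$. As $c(j_{1}) = j_{1}$ and $R^{0}$ is the identity, this is exactly the claimed right-hand side.

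For the inductive step, assume the formula for all words of length $K-1$, and let $\omega = j_{K} j_{K-1}\cdots j_{1}$ with length-$(K-1)$ tail $\omega' = j_{K-1}\cdots j_{1}$, so that $S_{\omega}\mathds{1} = S_{j_{K}}(S_{\omega'}\mathds{1})$. Applying the definition of $S_{j_{K}}$ and then substituting $R(x,y)$ into the inductive hypothesis for $S_{\omega'}\mathds{1}$, I obtain
\[ S_{\omega}\mathds{1}(x,y) = 2 e^{2\pi i j_{K} x} H_{j_{K}}(x,y)\, e^{2\pi i c(\omega')\, [R(x,y)]_{1}} \prod_{k=1}^{K-1} 2 H_{j_{k}}\big(R^{K-1-k}(R(x,y))\big), \]
where $[R(x,y)]_{1}$ denotes the first coordinate of $R(x,y)$.

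Two bookkeeping facts then finish the computation. First, the iterates compose, $R^{K-1-k}\circ R = R^{K-k}$, so the product becomes $\prod_{k=1}^{K-1} 2 H_{j_{k}}(R^{K-k}(x,y))$; appending the factor $H_{j_{K}}(x,y) = H_{j_{K}}(R^{0}(x,y))$ extends it to the full product $\prod_{k=1}^{K} 2 H_{j_{k}}(R^{K-k}(x,y))$. Second, since $[R(x,y)]_{1} = 4x \bmod 1$ and $c(\omega') \in \mathbb{N}_{0}$, the periodicity identity $e^{2\pi i n (t \bmod 1)} = e^{2\pi i n t}$ for integer $n$ gives $e^{2\pi i c(\omega')\, [R(x,y)]_{1}} = e^{2\pi i\, 4 c(\omega')\, x}$. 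Combining the two exponential factors and using $4 c(\omega') + j_{K} = c(\omega)$, which is immediate from the base-$4$ definition of $c$, yields $e^{2\pi i c(\omega) x}$ and completes the induction.

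The only point requiring care is the second fact: the $\bmod 1$ in the definition of $R$ must be absorbed into the exponential, and this is precisely where the integrality of $c(\omega')$ enters. I expect this to be the main (though mild) obstacle, since naively substituting $R(x,y)$ into the $x$-dependent exponential appears to break the clean product structure, and it is the periodicity of $t\mapsto e^{2\pi i n t}$ that rescues it. Everything else is routine algebra; one should also record that $R$ maps $C_{4}\times[0,1]$ into itself, so that each composition $H_{j_{k}}\circ R^{K-k}$ is well defined $\mu_{4}\times\lambda$-almost everywhere.
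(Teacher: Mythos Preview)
Your proof is correct and follows essentially the same induction on word length as the paper, peeling off the leftmost letter $j_{K}$, applying the inductive hypothesis to the tail $\omega'$, and recombining via $4c(\omega') + j_{K} = c(\omega)$. If anything, your version is slightly more explicit than the paper's in handling the $\bmod 1$ in $R$ via the periodicity of $e^{2\pi i n t}$ for integer $n$, a point the paper leaves implicit.
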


\begin{proof}
We proceed by induction on the length of the word $\omega$.  The equality is readily verified for $| \omega | =1$.  Let $\omega_{0} = j_{K-1} j_{n-2} \dots j_{1}$.  We have
\begin{align*}
S_{\omega} \mathds{1} &= S_{j_{K}} S_{\omega_{0}} \mathds{1} \\
&= S_{j_{K}} \left[ e^{2 \pi i c(\omega_{0}) x } \left( \prod_{k=1}^{K-1} 2 H_{j_{k}}(R^{K-1-k}(x,y)) \right) \right] \\
&= 2 e^{2 \pi i \lambda_{j_{K}} x} H_{j_{K}}(x,y) e^{2 \pi i c(\omega_{0}) \cdot 4x } \left( \prod_{k=1}^{K-1} H_{j_{k}}(R^{K-k}(x,y)) \right) \\
&= 2 e^{2 \pi i (\lambda_{j_{K}} + 4c(\omega_{0}) ) x} H_{j_{K}}(R^{K-K}(x,y)) \left( \prod_{k=1}^{K-1} 2 H_{j_{k}}(R^{K-k}(x,y)) \right) \\
&= e^{2 \pi i c(\omega) x} \left( \prod_{k=1}^{K} 2 H_{j_{k}}(R^{K-k}(x,y)) \right).
\end{align*}
The last line above is justified by the following calculation:
\begin{align*}
\lambda_{j_{K}} + 4 c(\omega_{0})  &=  \lambda_{j_{K}} + 4 \left( \sum_{k=1}^{K-1} \lambda_{j_{k}}4^{K-1-k} \right)  \\
&= \lambda_{j_{K}} 4^{K-K} +  \sum_{k=1}^{K-1} \lambda_{j_{k}} 4^{K-k} \\
&= \sum_{k=1}^{K} \lambda_{j_{k}} 4^{K-k} \\
&= c(\omega).
\end{align*}
\end{proof}

We wish to project the vectors $S_{\omega} \mathds{1}$ onto the subspace $V$.  The following lemma calculates that projection, where $P_{V}$ denotes the projection onto the subspace $V$.

\begin{lemma} \label{L:project}
If $f(x,y) = g(x)h(x,y)$ with $g \in L^{2}(\mu_{4})$ and $h \in L^{\infty}(\mu_{4} \times \lambda)$, then
\[ [P_{V} f](x,y) =  g(x) G(x) \]
where
$G(x) = \int_{[0,1]} h(x,y) d \lambda(y)$.
\end{lemma}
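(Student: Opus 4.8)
The plan is to exploit the defining property of the orthogonal projection: $P_{V}f$ is the unique element of $V$ for which $f - P_{V}f$ is orthogonal to all of $V$. Since $\lambda$ is normalized Lebesgue measure on $[0,1]$, we have $\lambda([0,1]) = 1$ and $\chi_{[0,1]}(y) = 1$ for $(\mu_{4} \times \lambda)$-a.e. $(x,y)$; thus $V$ is precisely the subspace of functions depending on the $x$ variable alone. Projecting onto this subspace is nothing more than the conditional expectation obtained by averaging out $y$, and the lemma is the explicit form of that statement for a product $f = gh$. So the strategy is to write down the candidate $\tilde{f} = gG$, check it lies in $V$, and verify the orthogonality relation.

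First I would produce and justify the candidate. Set $\tilde{f}(x,y) = g(x)G(x)$ with $G(x) = \int_{[0,1]} h(x,y)\, d\lambda(y)$. Because $h \in L^{\infty}(\mu_{4} \times \lambda)$, the inner integral satisfies $|G(x)| \leq \|h\|_{\infty}$ for $\mu_{4}$-a.e. $x$, so $G \in L^{\infty}(\mu_{4})$; combined with $g \in L^{2}(\mu_{4})$ this gives $gG \in L^{2}(\mu_{4})$, hence $\tilde{f} \in V$. This is the only place the boundedness hypothesis on $h$ is genuinely needed.

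Next I would verify orthogonality. For an arbitrary $v \in V$, identified with a function $v(x) \in L^{2}(\mu_{4})$, I compute
\[ \langle f - \tilde{f}, v \rangle = \int \bigl( g(x)h(x,y) - g(x)G(x) \bigr)\, \overline{v(x)}\, d(\mu_{4} \times \lambda). \]
Since $f \in L^{2}$ and $v \in L^{2}$, the integrand lies in $L^{1}(\mu_{4} \times \lambda)$ by Cauchy--Schwarz, so Fubini's theorem permits integrating in $y$ first:
\[ \langle f - \tilde{f}, v \rangle = \int \overline{v(x)}\, g(x) \left( \int_{[0,1]} \bigl( h(x,y) - G(x) \bigr)\, d\lambda(y) \right) d\mu_{4}(x). \]
The inner integral equals $G(x) - G(x)\lambda([0,1]) = 0$ because $\lambda$ is a probability measure, so the pairing vanishes for every $v \in V$. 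By uniqueness of the orthogonal projection, $P_{V}f = \tilde{f} = gG$, as claimed.

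I do not anticipate a substantive obstacle here: the only points requiring care are the integrability needed to invoke Fubini (supplied by the $L^{2}$--$L^{2}$ Cauchy--Schwarz bound) and the membership $gG \in L^{2}(\mu_{4})$ (supplied by $h \in L^{\infty}$). The conceptual content is simply the identification of the projection onto $x$-dependent functions with integration against the normalized $y$-measure.
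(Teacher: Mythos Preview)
Your proof is correct and follows essentially the same approach as the paper: verify that the candidate $g(x)G(x)$ lies in $V$, then use Fubini to show $f - gG$ is orthogonal to every element of $V$. You actually supply a bit more detail than the paper, explicitly justifying $gG \in L^{2}(\mu_{4})$ and the applicability of Fubini.
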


\begin{proof}
We verify that for every $F(x) \in L^2(\mu_{4})$, $f(x,y) - g(x) G(x)$ is orthogonal to $F(x)$.  We calculate utilizing Fubini's theorem:
\begin{align*}
\langle f - gG , F \rangle &= \int \int g(x) h(x,y) \overline{F(x)} \ d (\mu_{4} \times \lambda) - \int \int g(x)G(x)\overline{F(x)} \ d (\mu_{4} \times \lambda) \\
&=  \int_{C_{4}} g(x)\overline{F(x)} \left(\int_{[0,1]} h(x,y) - G(x) \ d \lambda(y) \right) \ d \mu_{4}(x) \\
&=  \int_{C_{4}} g(x)\overline{F(x)} \left( G(x) - G(x)  \right) \ d \mu_{4}(x) \\
&= 0.
\end{align*}
\end{proof}

For the purposes of the following lemma, $\alpha x$ and $\beta y$ are understood to be modulo $1$.

\begin{lemma} \label{L:integrate}
For any word $\omega = j_{K} j_{K-1} \dots j_{1}$,
\[
\int \prod_{k=1}^{K} 2 H_{j_{k}}(R^{k-1}(x,y)) \ d \lambda(y) = \prod_{k=1}^{K} 2 \int H_{j_{k}}(4^{k-1}x,y) \ d  \lambda(y).
\]
\end{lemma}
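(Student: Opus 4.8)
The plan is to fix $x$ and exploit the observation that, viewed as a function of $y$, the integrand on the left is a product whose $k$-th factor depends on $y$ only through the $k$-th binary digit of $y$; the independence of these digits under $\lambda$ then forces the integral to factor over $k$.

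First I would record the explicit step-function description of the filter blocks $H_j$. Since the sets $\Upsilon_k(C_4\times[0,1])$ partition $C_4\times[0,1]$, and since $\Upsilon_0,\Upsilon_2$ send $x$ into the ``digit $0$'' subcopy of $C_4$ while $\Upsilon_1,\Upsilon_3$ send it into the ``digit $2$'' subcopy, and $\Upsilon_0,\Upsilon_1$ send $y$ into $[0,1/2)$ while $\Upsilon_2,\Upsilon_3$ send it into $[1/2,1)$, the value $H_j(x,y)$ on $C_4\times[0,1]$ depends only on (a) the first base-$4$ digit of $x$ and (b) whether $y<1/2$ or $y\geq 1/2$. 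Next I would analyze a single factor. Writing $R^{k-1}(x,y)=(4^{k-1}x\bmod 1,\ 2^{k-1}y\bmod 1)$, I note that $4^{k-1}x\bmod 1$ has first base-$4$ digit equal to the $k$-th digit of $x$ (fixed once $x$ is fixed), while $2^{k-1}y\bmod 1$ lies in $[0,1/2)$ exactly when the $k$-th binary digit of $y$ equals $0$. Combining this with the step-function description, the factor $H_{j_k}(R^{k-1}(x,y))$ is, for fixed $x$, a function of $y$ depending on that single digit alone.

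The heart of the argument is then the factorization. The binary-digit maps send $\lambda$ forward to the uniform measure on $\{0,1\}^K$; equivalently, the first $K$ digits are independent and each equals $0$ or $1$ with probability $1/2$. Decomposing $[0,1]$ into the $2^K$ dyadic intervals on which the first $K$ digits are constant and summing, the integral of the product collapses into a product of single-digit sums, giving
\[
\int_{[0,1]} \prod_{k=1}^{K} 2 H_{j_k}(R^{k-1}(x,y))\, d\lambda(y) \;=\; \prod_{k=1}^{K} \int_{[0,1]} 2 H_{j_k}(R^{k-1}(x,y))\, d\lambda(y).
\]
Finally, since $y\mapsto 2^{k-1}y\bmod 1$ is measure preserving on $[0,1]$, each single-factor integral equals $\int_{[0,1]} H_{j_k}(4^{k-1}x,y)\, d\lambda(y)$ (with $4^{k-1}x$ read modulo $1$ as in the statement), which yields the claimed identity.

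I expect the main obstacle to be the clean justification of the factorization step, namely making precise the independence of the binary digits and the measure-preserving reduction $y\mapsto 2^{k-1}y\bmod 1$, rather than any of the bookkeeping, which becomes routine once the single-digit dependence of each factor is isolated; the dyadic-rational ambiguities in the digit expansion are harmless as they form a $\lambda$-null set. I would also remark that the standing constraint $a_{j0}+a_{j2}=a_{j1}+a_{j3}$ is \emph{not} needed for this lemma, each factor matching its counterpart on the right regardless; that constraint serves only to make the single-factor integrals independent of $x$, which is what the subsequent projection computation requires.
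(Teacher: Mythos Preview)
Your argument is correct. The paper's proof reaches the same conclusion by a closely related but differently organized route: it defines $F_{m}(x,y)=\prod_{k=m}^{K}2H_{j_k}(4^{k-1}x,2^{k-m}y)$ and applies the self-similarity identity $\int f(y)\,d\lambda=\tfrac12\bigl[\int f(y/2)\,d\lambda+\int f((y+1)/2)\,d\lambda\bigr]$ to peel off one factor at a time, observing that after the substitution the leading factor $H_{j_m}(4^{m-1}x,\cdot)$ becomes the constant $a_{j_m,q}$ or $a_{j_m,q+2}$ while the tail reduces to $F_{m+1}(x,y)$; this yields the recursion $\int F_m=[\int 2H_{j_m}(4^{m-1}x,y)\,d\lambda]\cdot\int F_{m+1}$ and the lemma follows by induction. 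Your approach instead identifies up front that the $k$-th factor depends on $y$ only through its $k$-th binary digit and then invokes the independence of those digits under $\lambda$ to factor the integral in one stroke, finishing with the measure-preservation of $y\mapsto 2^{k-1}y\bmod 1$. The two arguments rest on the same structural fact (each factor sees a distinct dyadic scale of $y$); the paper's IFS recursion is the inductive unwinding of your independence statement. Your version is a bit more conceptual and avoids the auxiliary functions $F_m$, while the paper's version makes the computation fully explicit without appealing to a probabilistic independence lemma. Your closing remark is also accurate: the paper's proof of this lemma does not invoke the condition $a_{j0}+a_{j2}=a_{j1}+a_{j3}$; that hypothesis enters only in the subsequent Proposition, where it forces the single-factor integrals to be independent of $x$.
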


\begin{proof}
Let $F_{m}(x,y) = \prod_{k=m}^{K} 2 H_{j_{k}}(4^{k-1}x, 2^{k-m}y)$.  Note that
\[
F_{m}(x, \frac{y}{2}) = 2 H_{j_{m}}( 4^{m-1}x, \dfrac{y}{2}) \left( \prod_{k=m+1}^{K} 2 H_{j_{k}}(4^{k-1}x, 2^{k-(m+1)}y) \right)= 2 H_{j_{m}}( 4^{m-1}x, \dfrac{y}{2}) F_{m+1}(x, y).
\]
Likewise for $F_{m}(x, \frac{y+1}{2})$.

Since $\lambda$ is the invariant measure for the iterated function system $y \mapsto \frac{y}{2}$, $y \mapsto \frac{y+1}{2}$, we calculate:
\begin{align*}
\int_{0}^{1} F_{m}(x,y) \ d \lambda(y) &= \dfrac{1}{2} \left[ \int_{0}^{1} F_{m}(x, \frac{y}{2}) \ d \lambda(y) + \int_{0}^{1} F_{m}(x, \frac{y+1}{2}) \ d \lambda(y) \right] \\
&= \dfrac{1}{2} \left[ \int_{0}^{1} 2 H_{j_{m}}( 4^{m-1}x, \dfrac{y}{2}) F_{m+1}(x, y) + 2 H_{j_{m}}( 4^{m-1}x, \dfrac{y+1}{2}) F_{m+1}(x, y) \ d \lambda(y) \right] \\
&= \dfrac{1}{2} \left[ \int_{0}^{1} 2a_{j_{m},q} F_{m+1}(x, y) + 2a_{j_{m},q+2} F_{m+1}(x, y) \ d \lambda(y) \right] \\
&= \dfrac{1}{2} \left[ 2a_{j_{m},q} + 2a_{j_{m},q+2} \right]  \cdot \left[ \int_{0}^{1} F_{m+1}(x, y) \ d \lambda(y) \right] \\
&= \left[ \int_{0}^{1} 2 H_{j_{m}}(4^{m-1}x, y) \ d \lambda(y) \right] \cdot \left[ \int_{0}^{1} F_{m+1}(x, y) \ d \lambda(y) \right]
\end{align*}
where $q = 0$ if $0 \leq 4^{m-1} x < \frac{1}{2}$, and $q=1$ if $\frac{1}{2} \leq 4^{m-1} x < 1$.

The result now follows by a standard induction argument.
\end{proof}

\begin{proposition} \label{P:projection}  
Suppose the filters $m_{j}(x,y)$ are chosen so that 
\begin{enumerate}
\item[i)] the matrix $H$ is unitary,
\item[ii)] $a_{00} = a_{01} = a_{02} = a_{03} = \frac{1}{2}$, and 
\item[iii)] for $j=0,1,2,3$, $a_{j0} + a_{j2} = a_{j1} + a_{j3}$.
\end{enumerate}
Then for any word $\omega = j_{K} \dots j_{1}$,
\[ P_{V} S_{\omega} \mathds{1} = d_{\omega} e^{2 \pi i c(\omega) x}, \]
where 
\begin{equation} \label{Eq:coeff}
d_{\omega} = \prod_{k=1}^{K} \left( a_{j_{k}0} + a_{j_{k}2} \right).
\end{equation}
\end{proposition}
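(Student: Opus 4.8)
The plan is to chain the three preceding lemmas together, with constraint~(iii) doing the essential work at the very end. First I would invoke Lemma~\ref{L:Sword} to write
\[ S_{\omega} \mathds{1} = e^{2 \pi i c(\omega) x}\, h(x,y), \qquad h(x,y) = \prod_{k=1}^{K} 2 H_{j_{k}}(R^{K-k}(x,y)). \]
Since each $H_{j_{k}}$ is a finite linear combination of characteristic functions it lies in $L^{\infty}(\mu_{4} \times \lambda)$, so $h \in L^{\infty}$, while $g(x) = e^{2 \pi i c(\omega) x}$ is bounded and hence in $L^{2}(\mu_{4})$. Thus $S_{\omega} \mathds{1}$ has exactly the form $g(x) h(x,y)$ required by Lemma~\ref{L:project}, which gives
\[ P_{V} S_{\omega} \mathds{1} = e^{2 \pi i c(\omega) x}\, G(x), \qquad G(x) = \int_{[0,1]} h(x,y)\, d\lambda(y). \]
The whole proposition therefore reduces to showing that $G(x)$ equals the constant $d_{\omega}$.

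Next I would rewrite the product in $h$ so that Lemma~\ref{L:integrate} applies. The two products differ only in how the letters of $\omega$ are paired with powers of $R$: in Lemma~\ref{L:Sword} the letter $j_{k}$ carries $R^{K-k}$, whereas Lemma~\ref{L:integrate} pairs its $k$-th letter with $R^{k-1}$. Since the product is commutative, I would relabel by setting $i_{\ell} = j_{K+1-\ell}$, which turns $h$ into $\prod_{\ell=1}^{K} 2 H_{i_{\ell}}(R^{\ell-1}(x,y))$. Lemma~\ref{L:integrate} then factorizes the integral:
\[ G(x) = \prod_{\ell=1}^{K} 2 \int_{[0,1]} H_{i_{\ell}}(4^{\ell-1}x, y)\, d\lambda(y). \]

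The crux is evaluating a single factor and seeing that condition~(iii) removes its $x$-dependence. From the proof of Lemma~\ref{L:integrate}, the factor $2 \int_{[0,1]} H_{i_{\ell}}(4^{\ell-1}x, y)\, d\lambda(y)$ equals $a_{i_{\ell}0} + a_{i_{\ell}2}$ when $4^{\ell-1}x \bmod 1$ lies in $[0,\tfrac{1}{2})$ and $a_{i_{\ell}1} + a_{i_{\ell}3}$ when it lies in $[\tfrac{1}{2},1)$. A priori this makes each factor, and hence $G$, depend on $x$. But condition~(iii) asserts precisely that $a_{i_{\ell}0} + a_{i_{\ell}2} = a_{i_{\ell}1} + a_{i_{\ell}3}$, so the two possible values coincide and each factor equals the constant $a_{i_{\ell}0} + a_{i_{\ell}2}$. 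Undoing the relabeling yields
\[ G(x) = \prod_{\ell=1}^{K} \bigl( a_{i_{\ell}0} + a_{i_{\ell}2} \bigr) = \prod_{k=1}^{K} \bigl( a_{j_{k}0} + a_{j_{k}2} \bigr) = d_{\omega}, \]
which is the claim.

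The main obstacle I anticipate is bookkeeping rather than conceptual: carefully matching the index conventions of Lemmas~\ref{L:Sword} and~\ref{L:integrate} (the reversal $i_{\ell} = j_{K+1-\ell}$) and confirming that the characteristic-function structure of $H_{j}$ produces exactly the two-valued factor above. The genuine content is the single observation that constraint~(iii) is exactly what makes each $y$-integral independent of $x$, collapsing $G(x)$ from a function of $x$ into the scalar $d_{\omega}$; everything else follows from the already-established lemmas. I note that constraint~(ii) is consistent with this computation (it forces each $j_{k}=0$ factor to equal $1$) but is not otherwise needed here.
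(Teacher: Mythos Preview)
Your proposal is correct and follows essentially the same route as the paper's proof: invoke Lemma~\ref{L:Sword}, then Lemma~\ref{L:project}, then Lemma~\ref{L:integrate}, and finally use condition~(iii) to collapse each $y$-integral to the constant $a_{j_k 0}+a_{j_k 2}$. Your treatment is in fact more explicit than the paper's on one point: you spell out the relabeling $i_\ell = j_{K+1-\ell}$ needed to reconcile the $R^{K-k}$ indexing of Lemma~\ref{L:Sword} with the $R^{k-1}$ indexing of Lemma~\ref{L:integrate}, whereas the paper silently passes from one to the other.
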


\begin{proof}
We apply the previous three Lemmas to obtain
\begin{align*}
[P_{V} S_{\omega} \mathds{1}](x,y) &= e^{2 \pi i c(\omega) x} \int \prod_{k=1}^{K} 2 H_{j_{k}}(4^{k-1}x,y) d \lambda(y) \\
&= e^{2 \pi i c(\omega) x} \prod_{k=1}^{K} 2 \int H_{j_{k}}(4^{k-1}x,y) d \lambda(y)
\end{align*}
By assumption iii), the integral $\int H_{j_{k}}(4^{k-1} x, y) d \lambda(y)$ is independent of $x$, and the value of the integral is $\frac{a_{j0}}{2} + \frac{a_{j2}}{2}$.  Equation \ref{Eq:coeff} now follows.
\end{proof}

\section{Concrete Constructions} \label{Sec:const}

We now turn to concrete constructions of Fourier frames for $\mu_{4}$.  The hypotheses of Lemma \ref{L:orthonormal} and Proposition \ref{P:projection} require $H$ to be unitary and requires the matrix
\[ A =
\left(
\begin{array} {rrrr}
a_{00} & a_{01} & a_{02} & a_{03} \\
a_{10} & a_{11} &  a_{12} & a_{13} \\
a_{20} & a_{21} &  a_{22} & a_{23} \\
a_{30} & a_{31} & a_{32} & a_{33}
\end{array}
\right)
\]
to have the first row be identically $\frac{1}{2}$ and to have the vector $\begin{pmatrix} 1 & -1 & 1 & -1 \end{pmatrix}^{T}$ in the kernel.

We can use Hadamard matrices to construct examples of such a matrix $A$.  Every $4 \times 4$ Hadamard matrix is a permutation of the following matrix:
\[ U_{\rho} =
\dfrac{1}{2}
\left(
\begin{array} {rrrr}
1 & 1 & 1 & 1 \\
1 & -1 & \rho & -\rho \\
1 & 1 & -1 & -1 \\
1 & -1 & -\rho  & \rho
\end{array}
\right)
\]
where $\rho$ is any complex number of modulus $1$.

If we set $H = U_{\rho}$, we obtain
\begin{equation} \label{Eq:Hrho}
A =
\dfrac{1}{2}
\left(
\begin{array} {rrrr}
1 & 1 & 1 & 1 \\
1 & 1 & \rho & \rho \\
1 & 1 & -1 & -1 \\
1 & 1 & -\rho  & -\rho
\end{array}
\right)
\end{equation}
which has the requisite properties to apply Lemma \ref{L:orthonormal} and Proposition \ref{P:projection}.

We define for $k = 1,2,3$, $l_{k} : \mathbb{N}_{0} \to \mathbb{N}_{0}$ by $l_{k}(n)$ is the number of digits equal to $k$ in the base $4$ expansion of $n$.  Note that $l_{k}(0) = 0$, and we follow the convention that $0^{0} = 1$.

\begin{theorem} \label{Th:main1}
For the choice $A$ as in Equation (\ref{Eq:Hrho}) with $\rho \neq -1$, the sequence
\begin{equation} \label{Eq:pars1}
\left\{ \left(\dfrac{1 + \rho}{2}\right)^{l_{1}(n)} 0^{l_{2}(n)} \left(\dfrac{1 - \rho}{2}\right)^{l_{3}(n)} e^{2 \pi i n x} : n \in \mathbb{N}_{0} \right\}
\end{equation}
is a Parseval frame in $L^2(\mu_{4})$.
\end{theorem}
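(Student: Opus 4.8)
The plan is to recognize the sequence (\ref{Eq:pars1}) as the image under $P_V$ of the orthonormal set of Lemma \ref{L:orthonormal}, and then to invoke the dilation principle: an orthonormal set is a Parseval frame for its closed span $K$, and by the remark following Lemma \ref{L:frame-proj}, its image under $P_V$ is a Parseval frame for $V$ \emph{provided} $V\subseteq K$. For the matrix $A$ of (\ref{Eq:Hrho}) one reads off $a_{j0}+a_{j2}=1,\tfrac{1+\rho}{2},0,\tfrac{1-\rho}{2}$ for $j=0,1,2,3$, so Proposition \ref{P:projection} together with the bijection $c$ gives $P_V S_\omega\mathds{1}=d_\omega e^{2\pi i c(\omega)x}$ with $d_\omega=\bigl(\tfrac{1+\rho}{2}\bigr)^{l_1(n)}0^{l_2(n)}\bigl(\tfrac{1-\rho}{2}\bigr)^{l_3(n)}$, $n=c(\omega)$, which is exactly (\ref{Eq:pars1}). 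Thus the entire theorem reduces to the single assertion $V\subseteq K$, where $K=\overline{\operatorname{span}}\{S_\omega\mathds{1}:\omega\in X_4\}$.

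To establish $V\subseteq K$ I would re-run the argument of Theorem \ref{diez-subsp} with $\mathcal H=L^2(\mu_4\times\lambda)$, $\mathcal E=\{S_\omega\mathds{1}:\omega\in X_4\}$, $\mathcal K=V$, $X=\mathbb R$, and the norm-continuous family $f(t)=e^{2\pi i t x}$. Condition (ii) holds because the span of the exponentials is a conjugation-closed, point-separating algebra, hence dense in $C(C_4)$ and in $L^2(\mu_4)=V$, with $\|f(t)\|=1$; condition (i), $\mathcal E=\bigsqcup_i S_i\mathcal E$, follows from $c(i\omega)=4c(\omega)+i$ and the partition of $\mathbb N_0$ into residues $4n+i$; and (iv) holds with $c_0=0$ since $f(0)=\mathds{1}\in\mathcal E$. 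For (iii), the adjoint formula, after using the constraint $a_{j0}+a_{j2}=a_{j1}+a_{j3}=:b_j$, yields
\[ S_j^* f(t)=\tfrac{\overline{b_j}}{2}\bigl(1+e^{\pi i(t-j)}\bigr)\,e^{2\pi i\frac{t-j}{4}x}, \]
so $g_j(t)=\tfrac{t-j}{4}$ and $\mathfrak m_j(t)=\tfrac{\overline{b_j}}{2}\bigl(1+e^{\pi i(t-j)}\bigr)$, whence $|\mathfrak m_j(t)|^2=|b_j|^2\cos^{2}\!\bigl(\tfrac{\pi(t-j)}{2}\bigr)$; using $|b_0|^2=1$, $b_2=0$, and $|b_1|^2+|b_3|^2=1$ (as $|\rho|=1$) a short computation gives $\sum_j|\mathfrak m_j(t)|^2=1$.

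The crux, and the step I expect to be the main obstacle, is condition (v): every continuous $h\ge 0$ with $h(0)=1$ satisfying $h(t)=\sum_j|\mathfrak m_j(t)|^2 h(g_j(t))$ is constant. Since the weights sum to $1$, the right-hand side is a convex combination of the $h(g_j(t))$, so $h$ obeys a maximum/minimum principle. The $g_j$ are contractions with common attractor $\Omega$, the Cantor set of base-$4$ expansions using the digits $j$ with $b_j\ne 0$, and on $\Omega\subseteq[-1,0]$ the weight $|\mathfrak m_j|^2$ vanishes only at the endpoints $t=0$ and $t=-1$. I would argue that an extremum attained at $t^\ast\in\Omega$ forces $h(g_j(t^\ast))=h(t^\ast)$ for every $j$ with $|\mathfrak m_j(t^\ast)|^2>0$; since the orbit of any point of $\Omega$ under the active maps is dense, $h$ is constant on $\Omega$ \emph{unless} the extremum is pinned at a ``stuck'' point, i.e.\ a fixed point $-j/3$ from which only a single map is active.

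Here is precisely where $\rho\ne -1$ is needed. The point $0$ is always stuck ($g_0(0)=0$ and only $g_0$ is active there). The competitor is the point $-1$: if only $g_0,g_3$ were active --- exactly the case $\rho=-1$, where $b_1=0$ --- then $-1$ would also be stuck, and $h$ could take different values at the two fixed points $0$ and $-1$, so (v) would fail. When $\rho\ne -1$ we have $b_1\ne 0$, hence $|\mathfrak m_1(-1)|^2=|b_1|^2>0$, so $g_1$ is active at $-1$ and carries it to the interior point $g_1(-1)=-\tfrac12$; thus $0$ is the unique stuck point. Consequently the two extrema of $h$ cannot both be pinned at $0$, so whichever one is attained away from $0$ spreads by density to all of $\Omega$ and forces $h$ to be constant there; iterating the functional equation (the level-$n$ weights $\prod_k|\mathfrak m_{j_k}|^2$ sum to $1$ and $g_{j_K}\circ\cdots\circ g_{j_1}(t)\to\Omega$) then propagates constancy to all of $\mathbb R$, verifying (v). With (i)--(v) in hand the argument of Theorem \ref{diez-subsp} gives $V\subseteq K$, and the Parseval conclusion follows as in the first paragraph.
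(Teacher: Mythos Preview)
Your proposal is correct and shares the paper's overall scaffolding: you identify (\ref{Eq:pars1}) with $\{P_V S_\omega\mathds{1}:\omega\in X_4\}$ via Proposition~\ref{P:projection}, reduce everything to $V\subseteq K$, and verify this through the framework of Theorem~\ref{diez-subsp} with $f(t)=e^{2\pi i t x}$, obtaining $S_j^*f(t)=\mathfrak m_j(t)f(g_j(t))$ and the Ruelle-type recursion for $h_X(t)=\|P_Kf(t)\|^2$. The paper does exactly this, including the check that $\mathcal E=\bigsqcup_j S_j\mathcal E$ (your justification via $c(i\omega)=4c(\omega)+i$ is the right idea, though one must note $S_jS_0\mathds{1}=S_j\mathds{1}$ to close the loop).

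Where you genuinely diverge is in proving that $h_X\equiv 1$ (the paper's Claim~\ref{Cl:constant}, your condition (v)). The paper proceeds by complex analysis: it first establishes that $h_X$ extends to an entire function (Claim~\ref{Cl:entire}, via Montel--Vitali applied to the partial sums $\sum_{|\omega|\le n}f_\omega f_\omega^*$), then subtracts the minimum $\beta$ of $h_X$ on $[-1,0]$, and shows that any zero $t_0\neq 0$ of $h_1=h_X-\beta$ generates, through the recursion, an infinite sequence of zeros accumulating in $[-1,0]$, whence $h_1\equiv 0$ by the identity theorem. Your route is purely real-variable and dynamical: since $\sum_j|\mathfrak m_j(t)|^2=1$, the recursion is a convex combination, so an extremum of $h$ on the attractor $\Omega\subset[-1,0]$ propagates along orbits of the active $g_j$; because $g_j((-1,0))\subset(-1,0)$ for each active $j$, these orbits are dense in $\Omega$ unless the extremum sits at the unique ``stuck'' fixed point $0$, forcing $h\equiv 1$ on $\Omega$; iterating the recursion and using $g_\omega(t)\to\Omega$ uniformly then extends this to all of $\mathbb R$. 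Both arguments invoke $\rho\neq -1$ at precisely the same moment---the paper to make the coefficient in (\ref{r2}) nonzero, you to keep $g_1$ active at $-1$ so that $-1$ is not a second stuck point---and your approach makes the role of that hypothesis especially transparent while avoiding the complex-analytic machinery; the paper's approach, once analyticity is in hand, is perhaps cleaner to write down and more portable to situations where the orbit-density argument is less immediate.
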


\begin{proof}
By Lemma \ref{L:orthonormal}, we have that $ \{ S_{\omega} \mathds{1} : \omega \in X_{4} \}$ is an orthonormal set.  For a word $\omega = j_{K} j_{K-1} \dots j_{1}$, Proposition \ref{P:projection} yields that
\[
P_{V} S_{\omega} \mathds{1} =  e^{2 \pi i c(\omega) x} \prod_{k=1}^{K} \left(a_{j_{k}0} + a_{j_{k}2} \right).
\]
Then, setting $n = c(\omega)$, we obtain
\[ 
P_{V} S_{\omega} \mathds{1} = e^{2 \pi i n x} \left( a_{00} + a_{02} \right)^{K-l_{1}(n) - l_{2}(n) - l_{3}(n)}  \prod_{j=1}^{3} \left( a_{j0} + a_{j2} \right)^{l_{j}(n)}.
\]
Since
\[ a_{00} + a_{02} = 1, \quad a_{10} + a_{12} = \dfrac{ 1 + \rho }{2}, \quad a_{20} + a_{22} = 0, \quad a_{30} + a_{32} = \dfrac{ 1 - \rho }{2},\]
it follows that
\[
P_{V} S_{\omega} \mathds{1} = \left(\dfrac{1 + \rho}{2}\right)^{l_{1}(n)} 0^{l_{2}(n)} \left(\dfrac{1 - \rho}{2}\right)^{l_{3}(n)} e^{2 \pi i n x}.
\]
Since $c$ is a bijection, the set $\{ P_{V} S_{\omega} \mathds{1} : \omega \in X_{4} \}$ coincides with the set in (\ref{Eq:pars1}).

In order to establish that the set (\ref{Eq:pars1}) is a Parseval frame, we wish to apply Lemma \ref{L:frame-proj}, which requires that the subspace $V$ is contained in the closed span of $\{ S_{\omega} \mathds{1} : \omega \in X_{4} \}$.  Denote the closed span by $\mathcal{K}$.  We will proceed in a manner similar to Theorem \ref{diez-subsp}.  Define the function $f: \mathbb{R} \to V$ by $f(t) = e_{t}$ where $e_{t}(x,y) = e^{2 \pi i x t}$.  Note that $f(0) = \mathds{1} \in \mathcal{K}$.  Likewise, define a function $h_{X} : \mathbb{R} \to \mathbb{R}$ by
\[ h_{X}(t) = \sum_{\omega \in X_{4}} | \langle f(t) , S_{\omega} \mathds{1} \rangle |^2 = \| P_{\mathcal{K}} f(t) \|^2. \]

\begin{claim}  \label{Cl:constant}
We have $h_{X} \equiv 1$.
\end{claim}

Assuming for the moment that the claim holds, we deduce that $f(t) \in \mathcal{K}$ for every $t \in \mathbb{R}$.  Since $\{ f(\gamma) : \gamma \in \Gamma_{4} \}$ is an orthonormal basis for $V$, it follows that the closed span of $\{ f(t) : t \in \mathbb{R} \}$ is all of $V$.  We conclude that $V \subset \mathcal{K}$, and so Lemma \ref{L:frame-proj} implies that $\{ P_{V} S_{\omega} \mathds{1} : \omega \in X_{4} \}$ is a Parseval frame for $V$, from which the Theorem follows.

Thus, we turn to the proof of Claim \ref{Cl:constant}.  First, we require $\{ S_{\omega} \mathds{1} : \omega \in X_{4} \} = \cup_{j=0}^{3} \{ S_{j} S_{\omega} \mathds{1} : \omega \in X_{4} \}$, where the union is disjoint.  Clearly, the RHS is a subset of the LHS, and the union is disjoint.  Consider an element of the LHS: $S_{\omega} \mathds{1}$.  If $| \omega| \geq 2$, we write $S_{\omega} \mathds{1} = S_{j} S_{\omega_{0}} \mathds{1}$ for some $j$ and some $\omega_{0} \in X_{4}$, whence $S_{\omega} \mathds{1}$ is in the RHS.  If $ | \omega | = 1$, then we write $S_{\omega} \mathds{1} = S_{j} \mathds{1} = S_{j} S_{0} \mathds{1}$, which is again an element of the RHS.  Equality now follows.

As a consequence, 
\begin{align*}
h_{X}(t) &= \sum_{\omega \in X_{4}} | \langle f(t) , S_{\omega} \mathds{1} \rangle |^2 \\
&=  \sum_{j=0}^{3}  \sum_{\omega \in X_{4}} | \langle f(t) , S_{j} S_{\omega} \mathds{1} \rangle |^2 \\
&= \sum_{j=0}^{3}  \sum_{\omega \in X_{4}} | \langle S^{*}_{j} f(t) , S_{\omega} \mathds{1} \rangle |^2.
\end{align*}

We calculate:
{\allowdisplaybreaks 
\begin{align*}
[S_{j}^{*} f(t)](x,y) &= \dfrac{1}{2} \sum_{k=0}^{3} \overline{ m_{j}(\Upsilon_{k}(x,y)) } e_{t}(\Upsilon_{k}(x,y)) \\
&=  \dfrac{1}{2} \left[ \overline{a_{j0}} e^{-2 \pi i j x/4} e_{t}(\frac{x}{4}, \frac{y}{2}) + e^{-\pi i j} \overline{a_{j1}} e^{-2 \pi i j x/4} e_{t}(\frac{x+2}{4}, \frac{y}{2}) \right. \\
& \hspace{2cm} \left. + \overline{a_{j2}} e^{-2 \pi i j x/4} e_{t}(\frac{x}{4}, \frac{y+1}{2}) + e^{-\pi i j} \overline{a_{j3}} e^{-2 \pi i j x/4} e_{t}(\frac{x+2}{4}, \frac{y+1}{2})  \right] \\
&=  \dfrac{1}{2} \left[ \overline{a_{j0}} e^{-2 \pi i j x/4} e_{t}(\frac{x}{4}, \frac{y}{2}) + e^{-\pi i j} \overline{a_{j1}} e^{-2 \pi i j x/4} e^{\pi i t} e_{t}(\frac{x}{4}, \frac{y}{2}) \right. \\
& \hspace{2cm} \left. + \overline{a_{j2}} e^{-2 \pi i j x/4}  e_{t}(\frac{x}{4}, \frac{y}{2}) + e^{-\pi i j} \overline{a_{j3}} e^{-2 \pi i j x/4} e^{\pi i t}  e_{t}(\frac{x}{4}, \frac{y}{2})  \right] \\
&=  \dfrac{1}{2} \left[ \overline{a_{j0}}  + e^{-\pi i j} \overline{a_{j1}}  e^{\pi i t}  + \overline{a_{j2}} + e^{-\pi i j} \overline{a_{j3}}  e^{\pi i t}  \right] e^{-2 \pi i j x/4}  e_{t}(\frac{x}{4}, \frac{y}{2}) \\
&=  \dfrac{1}{2} \left[ \overline{a_{j0}}  + e^{-\pi i j} \overline{a_{j1}}  e^{\pi i t}  + \overline{a_{j2}} + e^{-\pi i j} \overline{a_{j3}}  e^{\pi i t}   \right] e^{2 \pi i ( t\frac{x}{4} - j \frac{x}{4})} \\
&=  \dfrac{1}{2} \left[ \overline{a_{j0}}  + e^{-\pi i j} \overline{a_{j1}}  e^{\pi i t}  + \overline{a_{j2}} + e^{-\pi i j} \overline{a_{j3}}  e^{\pi i t}  \right] e^{2 \pi i ( \frac{t-j}{4} x)} \\
&=  \dfrac{1}{2} \left[ \overline{a_{j0}}  + e^{-\pi i j} \overline{a_{j1}}  e^{\pi i t}  + \overline{a_{j2}} + e^{-\pi i j} \overline{a_{j3}}  e^{\pi i t}   \right] e_{\frac{t-j}{4}} (x,y).
\end{align*}
}

Thus, we define
\[ \mathfrak{m}_{j}(t) = \frac{1}{2} \left( \overline{a_{j0}} + \overline{a_{j2}} \right) + \frac{e^{- \pi i j}}{2} \left( \overline{a_{j1}} + \overline{a_{j3}} \right) e^{\pi i t}, \]
and
\[ g_{j}(t) = \frac{t-j}{4}. \]
As a consequence, we obtain
\begin{align}
h_{X}(t) &= \sum_{j=0}^{3}  \sum_{\omega \in X_{4}} | \langle S^{*}_{j} f(t) , S_{\omega} \mathds{1} \rangle |^2 \notag \\
&= \sum_{j=0}^{3} \sum_{\omega \in X_{4}} | \langle \mathfrak{m}_{j} (t) f(g_{j}(t)) , S_{\omega} \mathds{1} \rangle |^2  \notag \\
&= \sum_{j=0}^{3} | \mathfrak{m}_{j} (t) |^2 h_{X} (g_{j}(t)).  \label{Eq:ruelle}
\end{align}

Because of our choice of coefficients in the matrix $A$, which has the vector $\begin{pmatrix} 1 & -1 & 1 & -1 \end{pmatrix}^{T}$ in the kernel, we have for every $j$: $a_{j0} + a_{j2} = a_{j1} + a_{j3}$.  Thus, if we let $b_{j} = \overline{a_{j0}} + \overline{a_{j2}}$, the functions $\mathfrak{m}_{j}$ simplify to
\[ \mathfrak{m}_{j}(t) = b_{j} e^{\pi i \frac{t}{2} } \cos (\pi \frac{t}{2} ) \]
for $j=0,2,$ and
\[ \mathfrak{m}_{j}(t) = - i b_{j} e^{\pi i \frac{t}{2} } \sin (\pi \frac{t}{2} ) \] 
for $j = 1,3$.  Substituting these into Equation (\ref{Eq:ruelle}), 
\begin{equation} \label{Eq:ruelle1}
h_{X}(t) = \cos^2\left(\frac{\pi t}{2}\right) h_{X} \left(\frac{t}{4}\right)+
\sin^2\left(\frac{\pi t}{2}\right)\frac{|1+ \overline{\rho}|^2}{2} h_{X} \left(\frac{t-1}{4}\right)+
\sin^2\left(\frac{\pi t}{2}\right)\frac{|1- \overline{\rho}|^2}{2} h_{X} \left(\frac{t-3}{4}\right) .
\end{equation}

\begin{claim} \label{Cl:entire}
The function $h_{X}$ can be extended to an entire function.
\end{claim}

Assume for the moment that Claim \ref{Cl:entire} holds, we finish the proof of Claim \ref{Cl:constant}.  If $h_{X}(t) = 1$ for $t \in [-1,0]$, then $h_{X}(z) = 1$ for all $z \in \mathbb{C}$, and Claim \ref{Cl:constant} holds.  

Now, assume to the contrary that $h_{X}(t)$ is not identically $1$ on $[-1,0]$.  Since $0 \leq h_{X}(t) \leq 1$ for $t$ real,  then $\beta = \text{min} \{h_{X}(t) : t \in [-1,0] \} < 1$.  Because constant functions satisfy (\ref{Eq:ruelle1}), $h_1:=h_{X} - \beta$  also satisfies Equation (\ref{Eq:ruelle1}).  There exists $t_0$ such that $h_1(t_0)=0$ and $t_{0}\neq0$ as $h_{X}(0)=1$.  Since $h_1\geq 0$ each of the terms in (\ref{Eq:ruelle1}) must vanish :
\begin{equation}\label{r1}
\cos^2\left(\frac{\pi t_0}{2}\right) h_1\left(\frac{t_0}{4}\right)=0
\end{equation}
\begin{equation}\label{r2}
\sin^2\left(\frac{\pi t_0}{2}\right)\frac{|1+ \overline{\rho}|^2}{2}h_1\left(\frac{t_0-1}{4}\right)=0
\end{equation}
\begin{equation}\label{r3}
\sin^2\left(\frac{\pi t_0}{2}\right)\frac{|1- \overline{\rho}|^2}{2}h_1\left(\frac{t_0-3}{4}\right)=0
\end{equation}
Our hypothesis is that $\rho \neq -1$, so in Equation (\ref{r2}), the coefficient $\frac{| 1 + \overline{\rho} |}{2} \neq 0$.

Case 1: If $t_0\neq -1$ then Equation (\ref{r1}) implies $h_1(t_0/4)=0=h_1(g_0(t_0))$. Let $t_1:=g_0(t_0)\in (-1,0)$; iterating the previous argument implies that $h_{1}(g_0(t_{1})) = 0$. Thus, we obtain an infinite sequence of zeroes of $h_1$.

Case 2: If $t_0=-1$, then the previous argument does not hold.  However, we can construct another zero of $h_1$, $t_0'\in (-1,0)$ to which the previous argument will hold.  Indeed, if $t_{0} = -1$, Equation (\ref{r2}) implies $h_1((t_0-1)/4)=h_1(-1/2)=0$. Let $t_0'=-1/2$ and continue as in Case 1.  

In either case, $h_1$ vanishes on a (countable) set with an accumulation point, and since $h_1$ is analytic it follows that $h_1\equiv 0$, a contradiction, and Claim \ref{Cl:constant} holds.

Now, to prove Claim \ref{Cl:entire}, we follow the proof of Lemma 4.2 of \cite{JP98}.  For a fixed $\omega \in X_{4}$, define $f_{\omega}  : \mathbb{C} \to \mathbb{C}$ by
\[ f_{\omega} (z) = \langle e_{z} , S_{\omega} \mathds{1} \rangle = \int e^{2 \pi i z x} \overline{[S_{\omega} \mathds{1}] (x,y)} \ d(\mu_{4} \times \lambda). \]
Since the distribution $\overline{[S_{\omega} \mathds{1}] (x,y)} \ d(\mu_{4} \times \lambda)$ is compactly supported, a standard convergence argument demonstrates that $f_{\omega}$ is entire.  Likewise, $f^{*}_{\omega} (z) = \overline{ f_{\omega} (\overline{z})}$ is entire, and for $t$ real,
\[ f_{\omega}(t) f^{*}_{\omega}(t) = \left(\langle e_{t}, S_{\omega} \mathds{1} \rangle \right) \left( \overline{ \langle e_{t}, S_{\omega} \mathds{1} \rangle } \right) = | \langle e_{t}, S_{\omega} \mathds{1} \rangle |^2. \]
Thus,
\[ h_{X} (t) = \sum_{\omega \in X_{4} } f_{\omega} (t) f^{*}_{\omega}(t). \]
For $n \in \mathbb{N}$, let $h_{n}(z) = \sum_{ |\omega| \leq n } f_{\omega} (z) f^{*}_{\omega}(z)$, which is entire.  By H\"older's inequality, 
\begin{align*}
\sum_{\omega \in X_{4}} | f_{\omega} (z) f^{*}_{\omega}(z) | &\leq \left( \sum_{\omega \in X_{4}} | \langle e_{z} , S_{\omega} \mathds{1} \rangle |^2 \right)^{1/2} \left( \sum_{\omega \in X_{4}} | \langle e_{\overline{z}} , S_{\omega} \mathds{1} \rangle |^2 \right)^{1/2} \\
& \leq \| e_{z} \| \| e_{\overline{z}} \| \\
& \leq e^{K Im(z)}
\end{align*}
for some constant $K$.  Thus, the sequence $h_{n}(z)$ converges pointwise to a function $h(z)$, and are uniformly bounded on strips $Im(z) \leq C$.  By the theorems of Montel and Vitali, the limit function $h$ is entire, which coincides with $h_{X}$ for real $t$, and Claim \ref{Cl:entire} is proved.
\end{proof}

\begin{example} \label{Ex:incomplete}
As mentioned in Section \ref{Sec:ortho}, in general, $\{ S_{\omega} \mathds{1} \}$ need not be complete, and the exceptional point $\rho = -1$ in Theorem \ref{Th:main1} provides the example.  In the case $\rho = -1$, the set (\ref{Eq:pars1}) becomes
\[ \{ d_{n} e^{2 \pi i n x} : n \in \mathbb{N}_{0} \} \]
where the coefficients $d_{n} = 1$ if $n \in \Gamma_{3}$ and $0$ otherwise.  Here, 
\[ \Gamma_{3} = \{ \sum_{n=0}^{N} l_{n} 4^{n} : l_{n} \in \{0, 3\} \} \]
and it is known \cite{DHS09a} that the sequence $\{ e^{2 \pi i n x} : n \in \Gamma_{3} \}$ is incomplete in $L^2(\mu_{4})$.  Thus, $\{P_V S_{\omega} \mathds{1}\}$ is incomplete in $V$, so $\{ S_{\omega} \mathds{1} \}$ is incomplete in $L^2(\mu_{4} \times \lambda)$.
\end{example}

We can generalize the construction of Theorem \ref{Th:main1} as follows.  We want to choose a matrix
\[
A =
\begin{pmatrix}
\frac{1}{2} & \frac{1}{2} & \frac{1}{2} & \frac{1}{2} \\
h_{10} & h_{11} & h_{12} & h_{13} \\
h_{20} & h_{21} & h_{22} & h_{23} \\
h_{30} & h_{31} & h_{32} & h_{33}
\end{pmatrix}
\]
such that $\begin{pmatrix} 1 & -1 & 1 & -1 \end{pmatrix}^{T}$ is in the kernel of $H$ and the matrix
\[
H =
\left(
\begin{array} {rrrr}
\frac{1}{2} & \frac{1}{2} & \frac{1}{2} & \frac{1}{2} \\
h_{10} & -h_{11} & h_{12} & -h_{13} \\
h_{20} & h_{21} & h_{22} & h_{23} \\
h_{30} & -h_{31} & h_{32} & -h_{33}
\end{array}
\right)
\]
is unitary.  We obtain a system of nonlinear equations in the $12$ unknowns.  To parametrize all solutions, we consider the following row vectors:

\begin{align}
\vec{v}_{0} &= \frac{1}{2}\begin{pmatrix} 1 & 1 & 1 & 1 \end{pmatrix}  & \vec{w}_{0} &= \frac{1}{2}\begin{pmatrix} 1 & -1 & 1 & -1 \end{pmatrix} \\
\vec{v}_{1} &= \frac{1}{2}\begin{pmatrix}  1 & -1  & -1 & 1 \end{pmatrix}  & \vec{w}_{1} &= \frac{1}{2}\begin{pmatrix} 1 & 1 & -1 & -1 \end{pmatrix} \\
\vec{v}_{2} &= \frac{1}{2}\begin{pmatrix} 1 & 1 & -1 & -1 \end{pmatrix} & \vec{w}_{2} &= \frac{1}{2}\begin{pmatrix} 1 & -1 & -1 & 1 \end{pmatrix}
\end{align}

If we construct the matrix $A$ so that the rows are linear combinations of $\{ \vec{v}_{0}, \vec{v}_{1}, \vec{v}_{2} \}$, then $A$ will satisfy the desired condition on the kernel.  Note that if the $j$-th row of $A$ is $\alpha_{j0} \vec{v}_{0} + \alpha_{j1} \vec{v}_{1} + \alpha_{j2} \vec{v}_{2}$ for $j = 1,3$, then the $j$-th row of $H$ is $\alpha_{j0} \vec{w}_{0} + \alpha_{j1} \vec{w}_{1} + \alpha_{j2} \vec{w}_{2}$, whereas if $j=0,2$, then the $j$-th row of $H$ is equal to the $j$-th row of $A$.

Thus, we want to choose coefficients $\alpha_{jk}$, $j=0,1,2,3$, $k=1,2,3$ so that the matrix
\begin{equation} \label{Eq:matrixA}
H =
\begin{pmatrix}
\alpha_{00} \vec{v}_{0} + \alpha_{01} \vec{v}_{1} + \alpha_{02} \vec{v}_{2} \\
\alpha_{10} \vec{w}_{0} + \alpha_{11} \vec{w}_{1} + \alpha_{12} \vec{w}_{2} \\
\alpha_{20} \vec{v}_{0} + \alpha_{21} \vec{v}_{1} + \alpha_{22} \vec{v}_{2} \\
\alpha_{30} \vec{w}_{0} + \alpha_{31} \vec{w}_{1} + \alpha_{32} \vec{w}_{2}
\end{pmatrix}
\end{equation}
is unitary.  To satisfy the requirement on the first row, we choose $\alpha_{00} = 1$ and $\alpha_{01} = \alpha_{02} = 0$.
Calculating the inner products of the rows of $H$, we obtain the following necessary and sufficient conditions:
\begin{align}
| \alpha_{j0} |^2 + | \alpha_{j1} |^2 + | \alpha_{j2} |^2 &= 1  \label{Eq:ortho1} \\
\alpha_{00} \overline{\alpha_{20}}   &= 0 \label{Eq:ortho02} \\
\alpha_{11} \overline{\alpha_{22}}  + \alpha_{12} \overline{\alpha_{21}} &= 0 \label{Eq:ortho12} \\
\alpha_{10} \overline{\alpha_{30}}  + \alpha_{11} \overline{\alpha_{31}} + \alpha_{12} \overline{\alpha_{32}} &= 0 \label{Eq:ortho13} \\
\alpha_{21} \overline{\alpha_{32}}  + \alpha_{22} \overline{\alpha_{31}} &= 0 \label{Eq:ortho23}
\end{align}

\begin{proposition}  \label{P:solution1}
Fix $\alpha_{00} =1$.   There exists a solution to the Equations (\ref{Eq:ortho1}) - (\ref{Eq:ortho23}) if and only if $\alpha_{10}, \alpha_{30} \in \mathbb{C}$ with
\begin{equation} \label{Eq:duality}
|\alpha_{10}|^2 + |\alpha_{30}|^2 = 1.
\end{equation}
\end{proposition}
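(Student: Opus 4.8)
The plan is to recognize that, once $\alpha_{00}=1$ (and hence $\alpha_{01}=\alpha_{02}=0$, as already chosen to fix the first row) is imposed, Equations (\ref{Eq:ortho1})--(\ref{Eq:ortho23}) say nothing more than that three explicit vectors in $\mathbb{C}^3$ form an orthonormal set, and that $|\alpha_{10}|^2+|\alpha_{30}|^2$ is precisely the squared norm of one column of the associated unitary matrix. First I would dispose of the forced coordinate: with $\alpha_{00}=1$, Equation (\ref{Eq:ortho02}) immediately forces $\alpha_{20}=0$. The key structural observation is the coincidence $\vec{v}_1=\vec{w}_2$ and $\vec{v}_2=\vec{w}_1$, together with $\vec{v}_0\perp\vec{w}_0,\vec{w}_1,\vec{w}_2$; the latter is why rows $0,1$ and rows $0,3$ of $H$ are automatically orthogonal and no equations for those pairs are listed.

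With this in hand I claim the remaining equations are exactly the orthonormality of
\[ r_1 = (\alpha_{11},\alpha_{12},\alpha_{10}), \quad r_2 = (\alpha_{22},\alpha_{21},0), \quad r_3 = (\alpha_{31},\alpha_{32},\alpha_{30}) \]
in $\mathbb{C}^3$. Indeed, (\ref{Eq:ortho1}) for $j=1,2,3$ states $\|r_j\|=1$ (using $\alpha_{20}=0$ for $r_2$), while a direct check shows (\ref{Eq:ortho12}), (\ref{Eq:ortho13}), (\ref{Eq:ortho23}) are $\langle r_1,r_2\rangle=\langle r_1,r_3\rangle=\langle r_2,r_3\rangle=0$ respectively; the index swap visible in (\ref{Eq:ortho12}) and (\ref{Eq:ortho23}) is simply the bookkeeping of $\vec{v}_1=\vec{w}_2$, $\vec{v}_2=\vec{w}_1$. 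Equivalently, the $3\times 3$ matrix $M$ with rows $r_1,r_2,r_3$ is unitary and carries the single structural constraint $M_{23}=0$; its third column is $(\alpha_{10},0,\alpha_{30})^{T}$.

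Necessity is then immediate: if a solution exists, $M$ is unitary, so its columns are orthonormal, and in particular the third column has unit norm, i.e. $|\alpha_{10}|^2+|\alpha_{30}|^2=1$, which is (\ref{Eq:duality}). For sufficiency, suppose $\alpha_{10},\alpha_{30}$ satisfy (\ref{Eq:duality}); then $(\alpha_{10},0,\alpha_{30})^{T}$ is a unit vector in $\mathbb{C}^3$, which I extend (by Gram--Schmidt) to an orthonormal basis and take as the columns of a unitary matrix $M$. The middle entry of its third column is $0$, so $M_{23}=0$ holds automatically, and reading $\alpha_{11},\alpha_{12},\alpha_{21},\alpha_{22},\alpha_{31},\alpha_{32}$ off the entries of $M$ yields values satisfying all of (\ref{Eq:ortho1})--(\ref{Eq:ortho23}); together with $\alpha_{00}=1$ and $\alpha_{01}=\alpha_{02}=\alpha_{20}=0$ this is a complete solution.

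The only genuine obstacle is the first step: seeing through the asymmetric appearance of (\ref{Eq:ortho1})--(\ref{Eq:ortho23}) to the clean orthonormality statement, which hinges on spotting the coincidences $\vec{v}_1=\vec{w}_2$ and $\vec{v}_2=\vec{w}_1$ built into the vectors $\vec{v}_k,\vec{w}_k$. Once the question is recast as ``which pairs $(\alpha_{10},\alpha_{30})$ arise as the two nonzero entries of the third column of a $3\times 3$ unitary $M$ with $M_{23}=0$,'' both directions follow from the standard facts that the columns of a unitary matrix are orthonormal and that any unit vector extends to an orthonormal basis.
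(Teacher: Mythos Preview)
Your argument is correct and is a genuinely different route from the paper's. The key move---repackaging Equations (\ref{Eq:ortho1})--(\ref{Eq:ortho23}) (after $\alpha_{20}=0$) as the statement that the $3\times 3$ matrix $M$ with rows $r_1=(\alpha_{11},\alpha_{12},\alpha_{10})$, $r_2=(\alpha_{22},\alpha_{21},0)$, $r_3=(\alpha_{31},\alpha_{32},\alpha_{30})$ is unitary---is clean, and the verification that (\ref{Eq:ortho12}), (\ref{Eq:ortho13}), (\ref{Eq:ortho23}) match $\langle r_1,r_2\rangle=\langle r_1,r_3\rangle=\langle r_2,r_3\rangle=0$ checks out directly. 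Both directions then reduce to one-line linear algebra: the third column $(\alpha_{10},0,\alpha_{30})^T$ of a unitary must be a unit vector, and conversely any unit vector of that form can be extended to an orthonormal basis of $\mathbb{C}^3$, with the vanishing $(2,3)$ entry coming for free.

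By contrast, the paper argues hands-on. For sufficiency it splits into cases $|\alpha_{10}|\in\{0,1\}$ versus $0<|\alpha_{10}|<1$, in the latter case introducing the scalar $\lambda=-\overline{\alpha_{10}}\alpha_{30}/(1-|\alpha_{10}|^2)$ and setting $(\alpha_{31},\alpha_{32})=\lambda(\alpha_{11},\alpha_{12})$, then verifying each equation directly. For necessity it observes from (\ref{Eq:ortho12}) and (\ref{Eq:ortho23}) that $(\overline{\alpha_{22}},\overline{\alpha_{21}})$ annihilates both rows of $\begin{pmatrix}\alpha_{11}&\alpha_{12}\\ \alpha_{31}&\alpha_{32}\end{pmatrix}$, so that matrix is singular, and from there extracts $\lambda$ and (\ref{Eq:duality}). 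Your approach is more conceptual and avoids all case analysis; the paper's approach, while longer, has the advantage of producing the explicit parametrization via $\lambda$, which is then used immediately afterward to write down the matrix $H$ in closed form.
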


\begin{proof} ($\Leftarrow$)
If $|\alpha_{10}|^2 = 1$, then we choose $\alpha_{21} = \alpha_{31} = 1$ and all other coefficients to be $0$ to obtain a solution to Equations (\ref{Eq:ortho1}) - (\ref{Eq:ortho23}).  Likewise, if $| \alpha_{10}|^2 = 0$, then choose $\alpha_{11} = \alpha_{21} = 1$ and all other coefficients to be $0$.

Now suppose that $0 < |\alpha_{10}| < 1$, and we choose $\lambda = \dfrac{- \overline{\alpha_{10}} \alpha_{30}}{1 - |\alpha_{10}|^2}$.  Then choose $\alpha_{11}$ and $\alpha_{12}$ such that $|\alpha_{11}|^2 + |\alpha_{12}|^2 = 1 - |\alpha_{10}|^2$.  Now let $\alpha_{31} = \lambda \alpha_{11}$ and $\alpha_{32} = \lambda \alpha_{12}$.  We have
\begin{align}
\alpha_{10} \overline{\alpha_{30}}  + \alpha_{11} \overline{\alpha_{31}} + \alpha_{12} \overline{\alpha_{32}} &= \notag
\alpha_{10} \overline{\alpha_{30}}  + \overline{\lambda} |\alpha_{11}|^2  + \overline{\lambda} |\alpha_{12}|^2 \notag \\
&= \alpha_{10} \overline{\alpha_{30}}  + \overline{\lambda} (1 - | \alpha_{10}|^2) \label{Eq:lambda} \\
&= 0, \notag
\end{align}
so Equation (\ref{Eq:ortho13}) is satisfied.

Equation (\ref{Eq:ortho02}) forces $\alpha_{20} = 0$; choose $\alpha_{21}$ and $\alpha_{22}$ such that $|\alpha_{21}|^2 + |\alpha_{22}|^2 = 1$ and $\alpha_{11} \overline{\alpha_{21}} + \alpha_{12} \overline{\alpha_{22}} = 0$.  Thus, Equations (\ref{Eq:ortho12}) and (\ref{Eq:ortho23}) are satisfied.  Finally, regarding Equation (\ref{Eq:ortho1}), it is satisfied for $j=0,1,2$ by construction.  For $j=3$, we calculate:
\begin{align}
|\alpha_{30}|^2 + |\alpha_{31}|^2 + |\alpha_{32}|^2  &=  | \alpha_{30} |^2 + |\lambda|^2 \left( |\alpha_{11}|^2 + | \alpha_{12}|^2 \right)  \notag \\
&= |\alpha_{30}|^2+ \dfrac{|\alpha_{10}|^2 | \alpha_{30} |^2}{(1 - | \alpha_{10}|^2)^2}  \left( 1 - | \alpha_{10}|^2  \right)  \notag \\
&= | \alpha_{30}|^2 \left( 1 + \dfrac{ | \alpha_{10} |^2 }{ 1 - | \alpha_{10} |^2} \right) \notag \\
&= \dfrac{ | \alpha_{30} |^2 }{ 1 - | \alpha_{10} |^2 } \label{Eq:norm} \\
&= 1 \notag
\end{align}
as required.

($\Rightarrow$)  Suppose that we have a solution to Equations (\ref{Eq:ortho1}) - (\ref{Eq:ortho23}).  If $|\alpha_{10}| = 1$, then we must have $\alpha_{11} = \alpha_{12} = 0$, and thus Equation (\ref{Eq:ortho13}) requires $\alpha_{30} = 0$, so Equation (\ref{Eq:duality}) holds.

Now  suppose $|\alpha_{10}| < 1$.  Since $\alpha_{20} = 0$, we must have that $| \alpha_{21} |^2 + | \alpha_{22} |^2 = 1$.  Combining this with Equations (\ref{Eq:ortho12}) and (\ref{Eq:ortho23}) imply that the matrix 
\[ \begin{pmatrix} \alpha_{11} & \alpha_{12} \\ \alpha_{31}  & \alpha_{32} \end{pmatrix} \]
is singular.  Thus, there exists a $\lambda$ such that $\alpha_{31} = \lambda \alpha_{11}$ and $\alpha_{32} = \lambda \alpha_{12}$.  Using the same computation as in Equation (\ref{Eq:lambda}), we conclude that $\lambda = \dfrac{- \overline{\alpha_{10}} \alpha_{30}}{1 - |\alpha_{10}|^2}$; then Equation (\ref{Eq:norm}) implies (\ref{Eq:duality}).
\end{proof}

The coefficient matrix we obtain from this construction is
\[ H = \dfrac{1}{2}
\left(
\begin{array} {rrrr}
1 & 1 & 1 & 1 \\
\alpha_{10} + \alpha_{11} + \alpha_{12} & \alpha_{10} - \alpha_{11} + \alpha_{12} & \alpha_{10} - \alpha_{11} - \alpha_{12} & \alpha_{10} + \alpha_{11} - \alpha_{12} \\
\alpha_{21} + \alpha_{22} & \ - \alpha_{21} + \alpha_{22} &  \ - \alpha_{21} - \alpha_{22} &  \alpha_{21} - \alpha_{22} \\
\alpha_{30} + \lambda \alpha_{11} + \lambda \alpha_{12} & \alpha_{30} - \lambda \alpha_{11} + \lambda \alpha_{12} & \alpha_{30} - \lambda \alpha_{11} -  \lambda \alpha_{12} & \alpha_{30} + \lambda \alpha_{11} -  \lambda \alpha_{12}
\end{array}
\right)
\]
where we are allowed to choose $\alpha_{11}$, $\alpha_{12}$, $\alpha_{21}$ and $\alpha_{22}$ subject to the normalization condition in Equation (\ref{Eq:ortho1}).  However, those choices do not affect the construction, since if we apply Proposition \ref{P:projection} and the calculation from Theorem \ref{Th:main1}, we obtain
\begin{equation} \label{Eq:general}
P_{V} S_{\omega} \mathds{1} = (\alpha_{10})^{\ell_{1}(n)} \cdot (0)^{\ell_{2}(n))} \cdot (\alpha_{30})^{\ell_{3}(n)} e^{2 \pi i n x}.
\end{equation}
This will in fact be a Parseval frame for $L^2(\mu_{4})$, provided $V \subset \mathcal{K}$, as in the proof of Theorem \ref{Th:main1}.

\begin{theorem} \label{Th:main2}
Suppose $p, q \in \mathbb{C}$ with $|p|^2 + |q|^2 = 1$.  Then 
\[ 
\{ p^{\ell_{1}(n)} \cdot 0^{\ell_{2}(n)} \cdot q^{\ell_{3}(n)} e^{2 \pi i n x} : n \in \mathbb{N}_{0} \}
\]
is a Parseval frame for $L^2(\mu_{4})$, provided $p \neq 0$.
\end{theorem}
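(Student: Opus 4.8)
The plan is to deduce Theorem \ref{Th:main2} from the apparatus already in place, essentially by rerunning the proof of Theorem \ref{Th:main1} with the filter coefficients supplied by Proposition \ref{P:solution1}. Given $p,q$ with $|p|^2+|q|^2=1$ and $p\neq 0$, I would first apply Proposition \ref{P:solution1} with $\alpha_{10}=p$ and $\alpha_{30}=q$: the hypothesis $|p|^2+|q|^2=1$ is exactly condition (\ref{Eq:duality}), so the proposition produces coefficients $\alpha_{jk}$ for which the matrix $H$ of (\ref{Eq:matrixA}) is unitary (the remaining freedom in $\alpha_{11},\alpha_{12},\alpha_{21},\alpha_{22}$ being irrelevant, as already noted before the statement). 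Because each row of the associated matrix $A$ lies in $\mathrm{span}\{\vec{v}_0,\vec{v}_1,\vec{v}_2\}$ and the top row is $\vec{v}_0$, the matrix $H$ automatically meets hypotheses i)--iii) of Proposition \ref{P:projection}, and $S_0\mathds{1}=\mathds{1}$. Hence Lemma \ref{L:orthonormal} gives that $\{S_\omega\mathds{1}:\omega\in X_4\}$ is orthonormal, and Equation (\ref{Eq:general}) identifies $P_VS_\omega\mathds{1}$ (with $n=c(\omega)$) as precisely the vector $p^{\ell_1(n)}0^{\ell_2(n)}q^{\ell_3(n)}e^{2\pi i n x}$ of the statement. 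By Lemma \ref{L:frame-proj}, the theorem then follows once we show $V\subset\mathcal{K}$, where $\mathcal{K}$ is the closed span of $\{S_\omega\mathds{1}\}$, for then projecting the orthonormal set yields a Parseval frame.

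To establish $V\subset\mathcal{K}$ I would imitate Theorem \ref{Th:main1}. Setting $f(t)=e_t$ and $h_X(t)=\|P_{\mathcal{K}}f(t)\|^2$, the computation of $S_j^*f(t)$ transfers unchanged and yields multipliers $\mathfrak{m}_j$ of the same form, now with $b_0=1$, $b_1=\overline{p}$, $b_2=0$, and $b_3=\overline{q}$, these being the values of $\overline{a_{j0}}+\overline{a_{j2}}$ forced by the construction. The resulting Ruelle-type identity reads
\[ h_X(t)=\cos^2\!\left(\tfrac{\pi t}{2}\right)h_X\!\left(\tfrac{t}{4}\right)+|p|^2\sin^2\!\left(\tfrac{\pi t}{2}\right)h_X\!\left(\tfrac{t-1}{4}\right)+|q|^2\sin^2\!\left(\tfrac{\pi t}{2}\right)h_X\!\left(\tfrac{t-3}{4}\right), \]
where the $\mathfrak{m}_2$ term drops out because $b_2=0$. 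The normalization $|p|^2+|q|^2=1$ guarantees that $h_X\equiv 1$ solves this identity, so the task reduces to showing it is the only admissible solution.

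The main obstacle, exactly as in Theorem \ref{Th:main1}, is ruling out non-constant solutions. The entire-function argument of Claim \ref{Cl:entire} carries over verbatim, since it uses only the orthonormality of $\{S_\omega\mathds{1}\}$ and the compact support of the defining distribution, neither of which depends on $p,q$; thus $h_X$ extends to an entire function. For the zero-propagation step I would again suppose $h_X\not\equiv 1$, pass to $h_1=h_X-\beta\geq 0$ with a zero at some $t_0\neq 0$ (using $h_X(0)=1$), and exploit nonnegativity to force each summand to vanish. The one place the hypothesis enters is that the middle coefficient $|p|^2$ must be nonzero in the case $t_0=-1$; this is precisely the role that $\rho\neq -1$ played before, and here it is exactly the standing assumption $p\neq 0$. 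With $|p|^2>0$, Case 2 produces a fresh zero at $-\tfrac{1}{2}\in(-1,0)$ to which the Case 1 iteration $t\mapsto t/4$ applies, generating zeros accumulating at $0$; since $h_1$ is entire this forces $h_1\equiv 0$, a contradiction. Therefore $h_X\equiv 1$, so $f(t)\in\mathcal{K}$ for every $t$, and as $\{f(\gamma):\gamma\in\Gamma_{4}\}$ spans $V$ we conclude $V\subset\mathcal{K}$, completing the proof.
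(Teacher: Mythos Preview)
Your proposal is correct and follows essentially the same route as the paper: substitute $\alpha_{10}=p$, $\alpha_{30}=q$ via Proposition~\ref{P:solution1} and Equation~(\ref{Eq:general}), then rerun the $h_X$ argument of Theorem~\ref{Th:main1} with $b_0=1$, $b_1=\overline{p}$, $b_2=0$, $b_3=\overline{q}$, using $p\neq 0$ in place of $\rho\neq -1$ for the Case~2 step. The paper's proof is terser (it simply says ``the same argument shows $h_X\equiv 1$''), but your elaboration of why the hypothesis $p\neq 0$ enters is exactly right.
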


\begin{proof}
Substitute $\alpha_{10} = p$ and $\alpha_{30} = q$ in Proposition \ref{P:solution1} and Equation (\ref{Eq:general}).  As noted, we only need to verify $V \subset \mathcal{K}$.  We proceed as in the proof of Theorem \ref{Th:main1};  indeed,  define $f$, $h_{X}$, $\mathfrak{m}_{j}$ and $g_{j}$ as previously.  We obtain $b_{0} = 1$, $b_{1} = \overline{p}$, $b_{2} = 0$, and $b_{3} = \overline{q}$, so Equation (\ref{Eq:ruelle1}) becomes
\[
h_{X}(t) = \cos^2\left(\frac{\pi t}{2}\right) h_{X} \left(\frac{t}{4}\right)+
| \overline{p} |^2 \sin^2\left(\frac{\pi t}{2}\right) h_{X} \left(\frac{t-1}{4}\right)+
| \overline{q} |^2 \sin^2\left(\frac{\pi t}{2}\right) h_{X} \left(\frac{t-3}{4}\right) .
\]
From here, the same argument shows that $h_{X} \equiv 1$, and $V \subset \mathcal{K}$.
\end{proof}

\section{Concluding Remarks}

We remark here that the constructions given above for $\mu_{4}$ does not work for $\mu_{3}$.  Indeed, we have the following no-go result.  To obtain the measure $\mu_{3} \times \lambda$, we consider the iterated function system:
\[ \Upsilon_{0} (x,y) = ( \frac{x}{3}, \frac{y}{2} ), \ \Upsilon_{1} (x,y) = ( \frac{x+2}{3}, \frac{y}{2} ), \  \Upsilon_{2} (x,y) = ( \frac{x}{3}, \frac{y+1}{2} ), \  \Upsilon_{3} (x,y) = ( \frac{x+2}{3}, \frac{y+1}{2} ). \]
Using the same choice of filters, the matrix $\mathcal{M}(x,y)$ reduces to
\[ H =
\left(
\begin{array} {rrrr}
a_{00} & a_{01} & a_{02} & a_{03} \\
a_{10} & e^{4 \pi i/3} a_{11} &  a_{12} & e^{4 \pi i /3}a_{13} \\
a_{20} & e^{2 \pi i/3} a_{21} &  a_{22} & e^{2 \pi i/3} a_{23} \\
a_{30} & a_{31} & a_{32} & a_{33}
\end{array}
\right)
\]
which we require to be unitary.  Additionally, we require the same conditions as for $\mu_{4}$, namely, the first row of $H$ must have all entries $\frac{1}{2}$, and $a_{j0} + a_{j2} = a_{j1} + a_{j3}$.  The inner product of the first two rows must be $0$.  Hence,
\[ \dfrac{1}{2} \left( a_{10} + e^{4 \pi i/3} a_{11} + a_{12} + e^{4 \pi i /3}a_{13} \right) =  \dfrac{1}{2} \left( a_{10} + a_{12} \right) ( 1 + e^{4 \pi i/3} ) = 0. \]
Consequently, $a_{10} + a_{12} = 0$.  Likewise, $a_{20} + a_{22} = a_{30} + a_{32} = 0$.  As a result,
\[ H \begin{pmatrix} 1 \\ 0 \\ 1 \\ 0 \end{pmatrix} = \begin{pmatrix} a_{00} + a_{02} \\ a_{10} + a_{12} \\ a_{20} + a_{22} \\ a_{30} + a_{32} \end{pmatrix} = \begin{pmatrix} 1 \\ 0 \\ 0 \\ 0 \end{pmatrix} \]
and so $H$ cannot be unitary.

It may be possible to extend the construction for $\mu_{4}$ to $\mu_{3}$ by considering a representation of $\mathcal{O}_{n}$ for some sufficiently large $n$, or by considering $\mu_{3} \times \rho$ for some other fractal measure $\rho$ rather than $\lambda$.

\begin{acknowledgement}
We thank Dorin Dutkay for assisting with the proof of Claim 1 in Theorem \ref{Th:main1}.
\end{acknowledgement}

\nocite{LW02a,Str00a,Li07a}

\bibliographystyle{amsplain}
\bibliography{cuntz-4}

\end{document}